\begin{document}
\title[Generating monotone quantities for the heat equation]{Generating monotone quantities for the heat equation}

\author{Jonathan Bennett}
\author{Neal Bez}

\address{School of Mathematics, University of Birmingham, Edgbaston, Birmingham, B15 2TT, United Kingdom}\email{j.bennett@bham.ac.uk}
\address{Department of Mathematics, Graduate School of Science and Engineering,
Saitama University, Saitama 338-8570, Japan}\email{nealbez@mail.saitama-u.ac.jp}

\thanks{2010 \textit{Mathematics Subject Classification}. 35K05, 42B37 (primary); 52A40 (secondary). \newline This work was supported by the European Research Council [grant number 307617] (Bennett) and JSPS Kakenhi [grant numbers 26887008 and 16H05995] (Bez)}

\theoremstyle{plain}
 \newtheorem{theorem}{Theorem}[section]
 \newtheorem{corollary}[theorem]{Corollary}

\theoremstyle{definition}
 \newtheorem{remark}[theorem]{Remark}
 \newtheorem{definition}[theorem]{Definition}
  \newtheorem{example}[theorem]{Example}

\newcommand\dive{\operatorname{div}}
\newcommand\tr{\operatorname{tr}}
\newcommand\diag{\operatorname{diag}}

\parindent = 5 pt
\parskip = 12 pt

\begin{abstract}
The purpose of this article is to expose and further develop a simple yet surprisingly far-reaching framework for generating monotone quantities for positive solutions to linear heat equations in euclidean space. This framework is intimately connected to the existence of a rich variety of algebraic closure properties of families of sub/super-solutions, and more generally solutions of systems of differential inequalities capturing log-convexity properties such as the Li--Yau gradient estimate. Various applications are discussed, including connections with the general Brascamp--Lieb inequality and the Ornstein--Uhlenbeck semigroup.
\end{abstract}
\maketitle

\begin{center} \textit{To Michael Cowling on his 65th birthday.}\end{center}

\section{Introduction}
The identification of functionals which vary monotonically as their inputs
flow according to a given evolution equation is generally considered
to be more of an art than a science. Such monotonicity results have many
consequences and, in particular, allow for a deeper understanding of a variety
of important inequalities in analysis through what is often described as the \emph{semigroup method}, or \emph{semigroup interpolation}. This method, which will be illustrated several times throughout this paper, has been extraordinarily effective across mathematics, impacting on areas such as information theory, kinetic theory, geometric analysis and harmonic analysis; see for example the articles \cite{Barthe2}, \cite{CM}, \cite{LedouxICM}, \cite{Villani} and \cite{BEsc}, along with the forthcoming discussion in Section \ref{section:context}.

The purpose of this paper is to describe and develop a simple
framework within which a rich variety of monotone quantities for positive solutions of the heat equation on $\mathbb{R}^n$ may be
``generated". As will become apparent, this framework turns out to be particularly effective within geometric aspects of harmonic analysis.\footnote{Our focus is on positive solutions, and in particular, we do not claim to generate monotone functionals containing \textit{derivatives} of solutions, such as many classical Lyapunov functionals.}

Often the monotonicity of a functional under a diffusion semigroup, such as the heat semigroup, is a consequence of an underlying convexity/concavity property.
The simplest situation is when the functional itself is convex/concave. Arguing formally,
suppose that a functional $\mathcal{F}$ (acting on functions on $\mathbb{R}^n$ say) is \textit{concave} -- that is,
$$
\mathcal{F}\left(\frac{f+g}{2}\right)\geq\frac{\mathcal{F}(f)+\mathcal{F}(g)}{2},
$$
and \textit{translation-invariant} -- that is, $$\mathcal{F}(f(\cdot-a))=\mathcal{F}(f) \quad \mbox{for all $a \in\mathbb{R}^n.$}
$$
If $u:\mathbb{R}_+\times\mathbb{R}^n\rightarrow\mathbb{R}_+$ solves the heat equation $\partial_tu=\Delta u$, that is $u=H_t*f$ where $H_t$ is the heat kernel on $\mathbb{R}^n$, then
$$
\mathcal{F}(H_t*f)=\mathcal{F}\left(\int f(\cdot-y)H_t(y)dy\right)\geq\int \mathcal{F}(f(\cdot-y))H_t(y)dy=\mathcal{F}(f);
$$
thus by the semigroup property of $H_t$ we conclude that $t\mapsto\mathcal{F}(u(t,\cdot))$ is nondecreasing. It should be noticed that $H_t$ could be replaced by any positive mass-preserving convolution semigroup here. As a result the use of such a functional in semigroup interpolation will tend to generate an inequality with a very large class of extremisers (such as H\"older's inequality for example). Typically the inequalities of interest in analysis have very special extremisers and are thus associated with functionals that are \textit{not} convex/concave. In the next section we describe an approach to generating more interesting monotone quantities; see \cite{BB} for the origins of this. This will be somewhat expository in nature, with our main contributions contained in the more sophisticated framework described in Section \ref{section:systems}. Some proofs will be collected in Section \ref{section:LiYau}, followed by further results and remarks in Sections \ref{section:remarks} and \ref{section:context}.

\section{An algebraic approach}\label{introduction}

In what follows it will be convenient, and sometimes necessary, to consider more general heat equations of the form
\begin{equation}\label{genheat}
\partial_t u=\dive(A^{-1}\nabla u)
\end{equation}
where $A$ is a (symmetric) positive-definite matrix. This will allow for different diffusion coefficients and more general anisotropic flows. The reader may wish to bear in mind the particular case where $A$ is a multiple of the identity, as much of what follows is applicable in that case.

Our approach is based on two simple observations. The first is that if $u:\mathbb{R}_+\times\mathbb{R}^n\rightarrow\mathbb{R}_+$ is a (sufficiently regular) supersolution of \eqref{genheat}, that is,
\begin{equation}\label{supersol}
\partial_t u\geq\dive(A^{-1}\nabla u),
\end{equation}
then
\begin{equation}\label{obs1}
t\mapsto\int_{\mathbb{R}^n}u(t,x)\,\mathrm{d}x
\end{equation}
is nondecreasing.\footnote{Or, more generally, the quantity $t\mapsto\int_{\mathbb{R}^n}u(t,x)w(x)\,\mathrm{d}x$ is nondecreasing for any (sufficiently regular) ``$A$-subharmonic" ($-\dive(A^{-1}\nabla w)\leq 0$) weight $w$.}
This is a straightforward consequence of the divergence theorem, since (again, if $u$ is sufficiently regular),
$$
\frac{\mathrm{d}}{\mathrm{d}t}\int_{\mathbb{R}^n}u(t,x)\,\mathrm{d}x=\int_{\mathbb{R}^n} \partial_tu(t,x)\,\mathrm{d}x\geq
\int_{\mathbb{R}^n}\dive(A^{-1}\nabla u)\,\mathrm{d}x= 0.$$
The second observation is that families of supersolutions are (algebraically) closed under many more operations than families of genuine solutions. For example, if $u_1, u_2$ are supersolutions of \eqref{genheat} then their geometric mean $\widetilde{u}:=u_1^{1/2}u_2^{1/2}$ is also a supersolution. This is a consequence of the elementary identity
\begin{eqnarray*}
\begin{aligned}
\frac{\partial_t\widetilde{u}-\dive(A^{-1}\nabla\widetilde{u})}{\widetilde{u}}&=\frac{1}{2}\left(
\frac{\partial_tu_1-\dive(A^{-1}\nabla u_1)}{u_1}+\frac{\partial_tu_2-\dive(A^{-1}\nabla u_2)}{u_2}\right)\\
&\;\;\;\;\;\;\;\;\;\;\;\;+\frac{1}{4}\left\langle A^{-1}\left(\frac{\nabla u_1}{u_1}-\frac{\nabla u_2}{u_2}\right),\left(\frac{\nabla u_1}{u_1}-\frac{\nabla u_2}{u_2}\right)\right\rangle.
\end{aligned}
\end{eqnarray*}
Notice that the monotonicity of the quantity
$$
\int_{\mathbb{R}^n}u_1^{1/2}u_2^{1/2}
$$ is an immediate consequence of these two observations, as is for example, the quantity
\begin{equation}\label{repeat}
\int_{\mathbb{R}^n} u_1^{1/4}u_2^{1/4}u_3^{1/2},
\end{equation}
by a repeated application of the closure property.
Here $u_1,u_2,u_3$ are suitably regular solutions to the heat equation. As a result, provided there are enough closure properties of this type, a wide variety of monotone quantities such as
\eqref{repeat} may be \emph{generated} algebraically.

Fortunately there are many more algebraic closure properties of families of supersolutions. Below we collect together a variety of such properties, beginning with the most elementary that also preserve genuine solutions.
In what follows the differential operators $\dive$, $\nabla$ and $\Delta$ should be understood to act in the number of variables dictated by context.
\subsection{Positive linear combinations}\label{sums} If $u_1, u_2$ are supersolutions of \eqref{genheat} and $\lambda_1,\lambda_2 > 0$ then $\widetilde{u}:=\lambda_1 u_1+\lambda_2 u_2$ is also a supersolution.
\subsection{Tensor products}\label{tensorproducts} If $\partial_tu_1\geq\dive(A_1^{-1}\nabla u_1)$ and $\partial_tu_2\geq\dive(A_2^{-1}\nabla u_2)$, then $\partial_t\widetilde{u}\geq\dive((A_1\oplus A_2)^{-1}\nabla\widetilde{u})$, where $\widetilde{u}:=u_1\otimes u_2$.
\subsection{Compositions with linear transformations}\label{compositions} Suppose that $L$ is an invertible linear transformation on $\mathbb{R}^n$. Then for the spatial composition $\widetilde{u}:=u\circ L$,
$$
\partial_t u\geq\dive(A^{-1}\nabla u)\quad\Rightarrow\quad\partial_t \widetilde{u}\geq\dive((L^*AL)^{-1}\nabla \widetilde{u}).
$$
In particular, if $L$ is orthogonal with respect to the inner product $\langle x,y\rangle_A:=\langle Ax,y\rangle$, then $u$ is a supersolution to \eqref{genheat} if and only if $\widetilde{u}$ is.
\subsection{Concave images}\label{concavecase}
Suppose that $B:\mathbb{R}_+^m\rightarrow\mathbb{R}_+$ is nondecreasing (in the sense that $\partial_jB\geq 0$ for each $j=1,\hdots,m$) and is concave (that is, the hessian $D^2B$ is negative semidefinite). If $u_1,\hdots, u_m$ are supersolutions of \eqref{genheat} then $\widetilde{u}:=B(u_1,\hdots, u_m)$ is also a supersolution of \eqref{genheat}. This is a direct consequence of the identity
\begin{equation} \label{e:concaveimages}
\partial_t \widetilde{u}-\dive(A^{-1}\nabla\widetilde{u})=\sum_{j=1}^m\partial_j B(u)(\partial_t u_j-\dive(A^{-1}\nabla u_j))-\sum_{\ell=1}^n\langle D^2B(u)X^\ell,X^\ell\rangle,
\end{equation}
where $u:=(u_1,\hdots,u_m)$ and the $j$th component of $X^\ell\in\mathbb{R}^m$ is equal to the $\ell$th component of $A^{-1/2}\nabla u_j$.
Examples include the generalised geometric mean $B(x)=x_1^{p_1}\cdots x_m^{p_m}$, where the $p_j \geq 0$ are such that $p_1+\cdots+p_m=1$, and harmonic addition $B(x)=(x_1^{-1}+ \cdots +x_m^{-1})^{-1}$. In this context the function $B$ is sometimes referred to as a \textit{Bellman function}; see \cite{V1}.
\subsection{Anisotropic geometric means} \label{anisotropiccase}
The particular example of geometric means above may be extended to a certain directional setting provided we permit the diffusion matrix $A$ to change with $j$.
For each $j=1,\ldots,m$ suppose that $L_j:\mathbb{R}^n\rightarrow\mathbb{R}^{n_j}$ is a surjective linear transformation and $p_j$ is a nonnegative exponent. Suppose further that, for each $j=1,\ldots,m$, $A_j$ is a positive definite linear transformation on $\mathbb{R}^{n_j}$ such that
\begin{equation}\label{GBL}
M:=\sum_{j=1}^mp_jL_j^*A_jL_j
\end{equation}
is invertible\footnote{By taking traces and summing this reveals $p_1n_1+\cdots +p_mn_m=n$ as a necessary condition.}, and
\begin{equation}\label{BLcondition}
L_jM^{-1}L_j^*=A_j^{-1}.
\end{equation}
Then, for $$\widetilde{u}:=\prod_{j=1}^m(u_j\circ L_j)^{p_j}$$ we have that
\begin{equation} \label{e:BLclosure}
\partial_t u_j\geq\dive(A_j^{-1} \nabla u_j)\quad \text{for all $j=1,\ldots, m$} \quad \Rightarrow \quad \partial_t\widetilde{u}\geq\dive(M^{-1}\nabla\widetilde{u}).
\end{equation}
This is a direct consequence of the identity
\begin{align*}
\frac{\partial_t \widetilde{u}-\dive(M^{-1}\nabla\widetilde{u})}{\widetilde{u}}&=\sum_{j=1}^mp_j\left(\frac{\partial_t u_j-\dive(A_j^{-1}\nabla u_j)}{u_j}\right) \\
& \qquad \qquad +\sum_{j=1}^mp_j\left\langle L_j^*A_jL_j(w_j-\overline{w}),(w_j-\overline{w})\right\rangle,
\end{align*}
which may be seen from a careful inspection of the proof of Proposition 8.9 in \cite{BCCT}.
Here $w_j\in\mathbb{R}^n$ is any vector satisfying $A_jL_jw_j=\nabla(\log u_j)\circ L_j$, such as
\begin{equation}\label{defw}
w_j:=L_j^*(L_jL_j^*)^{-1}A_j^{-1}\nabla(\log u_j)\circ L_j,\quad\mbox{ and }\quad \overline{w}:=M^{-1}\sum_{j=1}^mp_jL_j^*A_jL_jw_j.
\end{equation}
This closure property, which is of course a generalisation of Closure Property \ref{compositions}, immediately recovers the monotonicity of
$$
t\mapsto \int_{\mathbb{R}^n}\prod_{j=1}^m(u_j\circ L_j)^{p_j}
$$
established in \cite{CLL} and \cite{BCCT}.
This monotonicity generates the celebrated geometric Brascamp--Lieb inequality of Ball \cite{Ball} and Barthe \cite{Barthetrans1}, \cite{Barthetrans2} by semigroup interpolation; see \cite{CLL} and \cite{BCCT} for further details. An important special case is the hypercontractive inequality for the Ornstein--Uhlenbeck semigroup.
\begin{example}[Hypercontractivity of the Ornstein--Uhlenbeck semigroup] \label{example:hyper}
Let
$$
\mathrm{d}\gamma(y) = \frac{1}{(2\pi)^{n/2}} e^{-\frac{1}{2}|y|^2} \mathrm{d}y
$$
be the normalised gaussian measure on $\mathbb{R}^n$, and let $\mathcal{L}$ be the differential operator
\[
\mathcal{L} = \Delta - \langle x,\nabla \rangle.
\]
Associated to this operator is the Ornstein--Uhlenbeck flow
\[
e^{s\mathcal{L}} F(x) = \int_{\mathbb{R}^n} F(e^{-s}x + \sqrt{1- e^{-2s}}y) \, \mathrm{d}\gamma(y)
\]
which is well-known to satisfy the hypercontractivity inequality (see \cite{Nelson})
\begin{equation} \label{e:hyper}
\| e^{s\mathcal{L}} G \|_{L^q(\mathrm{d}\gamma)} \leq \|G\|_{L^p(\mathrm{d}\gamma)}
\end{equation}
for any $1 < p < q < \infty$ and $s > 0$ which satisfy
\begin{equation} \label{e:hypercritical}
e^{2s} = \frac{q-1}{p-1}.
\end{equation}
The hypercontractivity inequality may be reduced via duality to
\[
\int_{\mathbb{R}^{2n}} B(F_1 \circ L_1, F_2 \circ L_2) \, \mathrm{d}\gamma \leq B\left(\int_{\mathbb{R}^n} F_1 \, \mathrm{d}\gamma,  \int_{\mathbb{R}^n} F_2 \, \mathrm{d}\gamma \right)
\]
for nonnegative $F_j \in L^1(\mathrm{d}\gamma)$, with
\[
B(x_1,x_2) = x_1^{\alpha_1} x_2^{\alpha_2},
\]
where
$(\alpha_1,\alpha_2) = (\frac{1}{q'}, \frac{1}{p})$, $L_j : \mathbb{R}^{2n} \to \mathbb{R}^n$ are given in block form by
\[
L_1 = (I_n \,\, 0) \quad \text{and} \quad L_2 = (\rho I_n \,\, \sqrt{1 - \rho^2}I_n),
\]
and $\rho = e^{-s}$. When formulated in this way, the hypercontractivity inequality is a consequence of the following monotonicity property proved by Ledoux in \cite{L} (see \cite{Hu} for a similar approach).
\begin{theorem} \label{t:hypermon}
Let $F_j \in L^1(\mathrm{d}\gamma)$ be nonnegative and $U_j(t,\cdot) = e^{t\mathcal{L}}F_j$ for $j=1,2$. If
\[
\widetilde{U}(t,x) = B(U_1(t,L_1x), U_2(t,L_2x)),
\]
then
\begin{equation} \label{e:Ledouxmono}
t \mapsto \int_{\mathbb{R}^{2n}} \widetilde{U}(t,\cdot) \, \mathrm{d}\gamma
\end{equation}
is nondecreasing for $t > 0$.
\end{theorem}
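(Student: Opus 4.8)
The plan is to realise Theorem~\ref{t:hypermon} through the two-step scheme of Section~\ref{introduction}, transplanted from the heat equation to the Ornstein--Uhlenbeck equation $\partial_t w = \mathcal{L}w = \Delta w - \langle x,\nabla w\rangle$ on $\mathbb{R}^{2n}$. First I would record the analogue of observation \eqref{obs1}: if $w$ is a sufficiently regular \emph{supersolution}, $\partial_t w \geq \mathcal{L}w$, then $t\mapsto\int_{\mathbb{R}^{2n}} w(t,\cdot)\,\mathrm{d}\gamma$ is nondecreasing, since $\mathcal{L}$ is symmetric on $L^2(\mathrm{d}\gamma)$ and annihilates constants, so that $\int_{\mathbb{R}^{2n}}\mathcal{L}w\,\mathrm{d}\gamma = 0$ after an integration by parts whose boundary terms are absorbed by the gaussian weight. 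Hence it suffices to show that $\widetilde{U}$ is an Ornstein--Uhlenbeck supersolution on $\mathbb{R}_+\times\mathbb{R}^{2n}$. Because for general nonnegative $F_j\in L^1(\mathrm{d}\gamma)$ the flow $U_j(t,\cdot)$ has only borderline gaussian growth, I would perform all differentiations and integrations by parts first for, say, bounded compactly supported $F_j$, where everything in sight is finite and smooth for $t>0$, and recover the general case afterwards by approximation and monotone convergence.

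The structural point is that each $V_j(t,x):=U_j(t,L_jx)$ is itself a genuine solution of the Ornstein--Uhlenbeck equation on $\mathbb{R}^{2n}$. Indeed the drift is automatically covariant under linear maps, $\langle x,\nabla_{2n}V_j\rangle = \langle L_jx,(\nabla_n U_j)\circ L_j\rangle$, whereas $\Delta_{2n}V_j = \tr\big(L_j^*(D^2_n U_j)L_j\big) = \tr\big((D^2_n U_j)L_jL_j^*\big) = (\Delta_n U_j)\circ L_j$, the last equality \emph{precisely} because $L_1L_1^* = L_2L_2^* = I_n$; therefore $\mathcal{L}_{2n}V_j = (\mathcal{L}_n U_j)\circ L_j = (\partial_t U_j)\circ L_j = \partial_t V_j$. (This is the Ornstein--Uhlenbeck counterpart of Closure Property~\ref{compositions} for a surjective $L_j$.) So the task reduces to showing that the weighted product $\widetilde{U}=V_1^{\alpha_1}V_2^{\alpha_2}$ of two Ornstein--Uhlenbeck solutions is a supersolution; note that here $\alpha_1+\alpha_2 = 1/q'+1/p > 1$, so this is \emph{not} literally an instance of Closure Property~\ref{concavecase}.

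For this, writing $f=\log\widetilde{U}=\alpha_1\log V_1+\alpha_2\log V_2$ and using $\Delta\log V_j = \Delta V_j/V_j - |\nabla V_j/V_j|^2$, one has the identity
\[
\frac{\mathcal{L}_{2n}\widetilde{U}-\partial_t\widetilde{U}}{\widetilde{U}} = \sum_{j=1}^{2}\alpha_j\,\frac{\mathcal{L}_{2n}V_j-\partial_t V_j}{V_j} + \Big|\sum_{j=1}^{2}\alpha_j\frac{\nabla V_j}{V_j}\Big|^2 - \sum_{j=1}^{2}\alpha_j\Big|\frac{\nabla V_j}{V_j}\Big|^2 .
\]
The first sum vanishes by the previous step, so $\partial_t\widetilde{U}\geq\mathcal{L}_{2n}\widetilde{U}$ amounts to $|\sum_j\alpha_j\nabla V_j/V_j|^2\leq\sum_j\alpha_j|\nabla V_j/V_j|^2$ pointwise. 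Setting $\nabla V_j/V_j = L_j^*\eta_j$ with $\eta_j := (\nabla_n\log U_j)\circ L_j\in\mathbb{R}^n$, and using $L_1L_1^*=L_2L_2^*=I_n$ together with $L_1L_2^*=L_2L_1^*=\rho I_n$, this becomes the quadratic inequality
\[
\alpha_1(1-\alpha_1)|\eta_1|^2 + \alpha_2(1-\alpha_2)|\eta_2|^2 \geq 2\alpha_1\alpha_2\rho\,\langle\eta_1,\eta_2\rangle \qquad\text{for all }\eta_1,\eta_2\in\mathbb{R}^n .
\]

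This holds if and only if $0\leq\alpha_j\leq 1$ and $\alpha_1\alpha_2\rho^2\leq(1-\alpha_1)(1-\alpha_2)$, i.e. iff the matrix
\[
\begin{pmatrix} \alpha_1(1-\alpha_1) & -\alpha_1\alpha_2\rho \\ -\alpha_1\alpha_2\rho & \alpha_2(1-\alpha_2) \end{pmatrix}
\]
is positive semidefinite. With $(\alpha_1,\alpha_2)=(1/q',1/p)$ one computes $\frac{(1-\alpha_1)(1-\alpha_2)}{\alpha_1\alpha_2} = \frac{q'p}{qp'} = \frac{p-1}{q-1}$, while $\rho^2 = e^{-2s} = \frac{p-1}{q-1}$ by \eqref{e:hypercritical}; together with $0<\alpha_j<1$, which holds since $1<p,q<\infty$, this makes the matrix positive semidefinite (indeed degenerate). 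Hence $\widetilde{U}$ is an Ornstein--Uhlenbeck supersolution and the claimed monotonicity follows from the first step. I expect the only genuinely delicate point to be the analytic bookkeeping flagged above -- differentiation under the integral, the vanishing of $\int_{\mathbb{R}^{2n}}\mathcal{L}_{2n}\widetilde{U}\,\mathrm{d}\gamma$, and finiteness of the monotone quantity for merely $L^1(\mathrm{d}\gamma)$ data -- rather than the algebra, which is routine once one recognises that the $V_j$ solve the equation on the larger space; that the critical relation \eqref{e:hypercritical} forces the $2\times2$ matrix to be \emph{degenerate} is the algebraic reflection of the sharpness of hypercontractivity.
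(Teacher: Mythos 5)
Your argument is correct, but it is not the route the paper takes. The paper proves Theorem \ref{t:hypermon} by transferring everything to heat flow: via the substitution $e^{-\frac{1}{2}|x|^2}U_j(t,x)=e^{nt}u_j(\tfrac12 e^{2t},e^tx)$ and two changes of variables, the gaussian measure is absorbed as a \emph{third} heat solution $u_3(t,\cdot)=e^{t\Delta}\delta_0$, and the resulting product $u_1(\tau,L_1z)^{\alpha_1}u_2(\tau,L_1^\perp z)^{\alpha_2}u_3(\tau,L_3z)^{\alpha_3}$ is shown to satisfy the geometric Brascamp--Lieb conditions $L_jM^{-1}L_j^*=A_j^{-1}$, so the anisotropic geometric-mean closure property \eqref{e:BLclosure} with $m=3$ applies; the point of that detour is to exhibit hypercontractivity as an instance of the paper's general heat-flow framework. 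You instead stay on the Ornstein--Uhlenbeck side: you check that $V_j=U_j(t,L_j\cdot)$ solves the OU equation on $\mathbb{R}^{2n}$ (using $L_jL_j^*=I_n$), write the log-derivative identity for $\widetilde U=V_1^{\alpha_1}V_2^{\alpha_2}$, and reduce the supersolution property to positive semidefiniteness of the $2\times 2$ form with off-diagonal $-\alpha_1\alpha_2\rho$, which is exactly (and degenerately) satisfied at the critical relation \eqref{e:hypercritical} since $(1-\alpha_1)(1-\alpha_2)/(\alpha_1\alpha_2)=(p-1)/(q-1)=\rho^2$ --- all of this checks out, including your correct observation that $\alpha_1+\alpha_2>1$ rules out a direct appeal to Closure Property \ref{concavecase} and that the gaussian drift supplies the missing concavity. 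Be aware, though, that this is essentially Ledoux's original argument, which the paper explicitly revisits in Section \ref{section:remarks} as a $\Gamma$-concavity closure property of OU supersolutions with $\Gamma_{jk}=L_jL_k^*$: your quadratic-form condition is precisely the $\Gamma$-concavity of $B(x_1,x_2)=x_1^{\alpha_1}x_2^{\alpha_2}$ there. What your route buys is brevity and a transparent algebraic reflection of sharpness (the degenerate form); what the paper's route buys is that the same statement, and indeed its extension to OU \emph{supersolutions}, drops out of the single unified heat-flow closure property \eqref{e:BLclosure} rather than a bespoke computation. The analytic caveats you flag (justifying differentiation under the integral, the vanishing of $\int\mathcal{L}\widetilde U\,\mathrm{d}\gamma$, and the monotone-convergence passage from truncated data) are handled at the same informal level as in the paper, so they do not constitute a gap.
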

If $u_j$ is given by
\[
e^{-\frac{1}{2}|x|^2}U_j(t,x) = e^{nt}u_j(\tfrac{1}{2}e^{2t},e^tx)
\]
then it is easily verified that $\partial_t u_j = \Delta u_j$, and we shall show that Theorem \ref{t:hypermon} is a consequence of the closure property \eqref{e:BLclosure} for an appropriate choice of mappings $L_j$ and with $m=3$ (the remaining heat-flow arising from the measure $\mathrm{d}\gamma$). \footnote{Moreover, our argument shows that Theorem \ref{t:hypermon} holds for suitably regular \emph{supersolutions} of the Ornstein--Uhlenbeck equation, the resulting $u_j$ being a supersolution of the heat equation.}

Writing $\tau = \frac{1}{2}e^{2t}$ and making the change of variables $y = e^{t}x$ reveals that
\begin{align*}
 \int_{\mathbb{R}^{2n}} \widetilde{U}(t,x) \, \mathrm{d}\gamma(x) & =  (2\tau)^{\frac{n}{2}(\alpha_1 + \alpha_2-1)}  \int_{\mathbb{R}^{2n}} u_1(\tau, L_1y)^{\alpha_1} u_2(\tau,L_2y)^{\alpha_2} \times \\
 & \qquad \qquad \qquad \qquad \qquad  \frac{\exp(\tfrac{1}{4\tau}(\alpha_1|L_1y|^2 + \alpha_2|L_2y|^2 - |y|^2))}{(4\pi \tau)^{\frac{n}{2}}}  \, \mathrm{d}y.
\end{align*}
Making the change of variables
$
z = (L_1y,L_2y)
$
leads to
\begin{align*}
\int_{\mathbb{R}^{2n}} \widetilde{U}(t,x) \, \mathrm{d}\gamma(x)
& =   \frac{(2\pi)^{\frac{n}{2}(1 - \alpha_1 - \alpha_2)}}{(1-\rho^2)^{\frac{n}{2}}}  \int_{\mathbb{R}^{2n}} u_1(\tau, L_1z)^{\alpha_1} u_2(\tau,L_1^\perp z)^{\alpha_2}   u_3(\tau,L_3z)^{\alpha_3} \, \mathrm{d}z,
\end{align*}
where $L_1^\perp = (0 \,\, I_n)$, $u_3(t,\cdot) = e^{t\Delta}\delta_0$, $\alpha_3 = 2 - \alpha_1 - \alpha_2$ and \[
L_3 = [(1+\tfrac{1}{q} - \tfrac{1}{p})(q-p)]^{-1/2} ( (\tfrac{p}{q'})^{1/2}\,I_n \quad  - (\tfrac{q}{p'})^{1/2} \,I_n  ).
\]
Observe that
\[
M := \alpha_1 L_1^*L_1 + \alpha_2 (L_1^\perp)^*L_1^\perp + \alpha_3 L_3^*L_3 = \frac{1}{1-\rho^2} \left( \begin{array}{ccccc}  I_n & - \rho I_n \\  - \rho I_n
& I_n \end{array}   \right)
\]
is clearly invertible with
\[
M^{-1} = \left( \begin{array}{ccccc}  I_n & \rho I_n \\  \rho I_n
& I_n \end{array}   \right).
\]
From this, straightforward calculations show that
$$
L_1M^{-1}L_1^* = L_1^\perp M^{-1}(L_1^\perp)^* = L_3M^{-1}L_3^*  = I_n.
$$
Thus, by \eqref{e:BLclosure}, $\widetilde{u}$ given by
$$
\widetilde{u}(\tau,z):=u_1(\tau, L_1z)^{\alpha_1} u_2(\tau,L_1^\perp z)^{\alpha_2}   u_3(\tau,L_3z)^{\alpha_3}
$$
is a supersolution for the heat equation $\partial_\tau u=\dive(M^{-1}\nabla u)$ on $\mathbb{R}^{2n}$, from which Theorem \ref{t:hypermon} follows.
\end{example}

\subsection{Convolution-based operations}\label{convolutions} This example is rather different from the previous ones as it involves a non-pointwise (and non-local) operation. As with Closure Property \ref{anisotropiccase} we will need to consider different diffusion matrices, although in this case they will \emph{necessarily} be multiples of the identity.
Let $n \geq 1$ and $0<p,p_1,p_2<\infty$ satisfy $\frac{1}{p_1}+\frac{1}{p_2}=1+\frac{1}{p}$, and $0\leq \sigma,\sigma_1,\sigma_2<\infty$ satisfy $p_1p_1'\sigma_1=p_2p_2'\sigma_2$ and $\sigma p=\sigma_1 p_1+\sigma_2 p_2$ (for example $\sigma_j=\frac{1}{p_j|p_j'|}$, $\sigma=\frac{1}{p|p'|}$).
Finally define $\widetilde{u}$ in terms of $u_1, u_2 : \mathbb{R}_+ \times \mathbb{R}^n \to \mathbb{R}_+$ by
$$
\widetilde{u}^{1/p}=u_1^{1/p_1}*u_2^{1/p_2},
$$
where convolution is taken with respect to the spatial variables on $\mathbb{R}^n$. It is shown in \cite{BB} that if $p_j\geq 1$ and $\partial_t u_j\geq\sigma_j\Delta u_j$ for $j=1,2$, then $\partial_t \widetilde{u}\geq \sigma\Delta \widetilde{u}$. A similar statement is also obtained for subsolutions provided $p_j\leq 1$; that is, if $\partial_t u_j\leq\sigma_j\Delta u_j$ for $j=1,2$ then $\partial_t \widetilde{u}\leq \sigma\Delta \widetilde{u}$.\footnote{Strictly speaking, for $\widetilde{u}$ to be sufficiently regular (and even defined at all) the above closure properties require additional technical ingredients -- see \cite{BB} for further discussion.} By semigroup interpolation these statements quickly lead to the sharp \emph{forward and reverse} Young's convolution inequalities (the sharp forward inequality due to Beckner \cite{Beckner} and Brascamp--Lieb \cite{BL}, and the sharp reverse inequality due to Brascamp--Lieb \cite{BL}). An interesting historical discussion of semigroup interpolation in related contexts, tracing back to the work of Stam \cite{Stam} in the late 1950's, may be found in \cite{Toscani}; see also the forthcoming Remark \ref{Toscaniremark}.

The Closure Properties \ref{sums}--\ref{convolutions} may be combined and iterated to generate a number of interesting and nontrivial monotone functionals from the point of view of geometric and harmonic analysis. For example, they quickly lead to direct heat-flow proofs of the sharp forward and reverse forms of the \emph{iterated} sharp Young's convolution inequality, and thus certain sharp forms of the Hausdorff--Young inequality; see \cite{BB}. Also noteworthy are the sharp Strichartz inequalities for the time-dependent free Schr\"odinger equation in spatial dimensions one and two; for example it is well known (see \cite{Foschi} and \cite{HZ}) that
\begin{equation}\label{sharpstrich}
\|e^{i s\Delta}f\|_{L^{4}_{s,x}(\mathbb{R} \times \mathbb{R}^2)} \leq
  2^{-1/2}\|f\|_{L^2(\mathbb{R}^2)},
\end{equation}
with equality occurring precisely on gaussians $f$.
For this we have the representation formula
$$
\|e^{is\Delta}f\|_{L^4_{s,x}(\mathbb{R}\times\mathbb{R}^2)}^4=\frac{1}{4}\int_O\int_{(\mathbb{R}^2)^2}f\otimes f(\rho x)f\otimes f(x) \,\mathrm{d}x\mathrm{d}\rho,
$$
which has its origins in \cite{HZ}; here $O$ is the group of isometries of $\mathbb{R}^4$ fixing the directions $(1,0,1,0)$ and $(0,1,0,1)$, and $\mathrm{d}\rho$ is the right-invariant Haar probability measure on $O$. If $u$ is a positive solution (or indeed supersolution) to the heat equation $\partial_t u=\Delta u$ on $\mathbb{R}^2$, and $\widetilde{u}:\mathbb{R}_+\times\mathbb{R}^4\rightarrow\mathbb{R}_+$ is given by
$$
\widetilde{u}:=\frac{1}{4}\int_O(u\otimes u)^{1/2}(\rho x)(u\otimes u)^{1/2}(x)\,\mathrm{d}\rho,
$$
then using Closure Properties \ref{tensorproducts}, \ref{compositions}, \ref{concavecase} and \ref{sums} in sequence, we find that the function $\widetilde{u}$ is also a supersolution. As a consequence the quantity
$$
t\mapsto\|e^{is\Delta}(u^{1/2})\|_{L^4_{s,x}(\mathbb{R}\times\mathbb{R}^2)}^4=\int_{(\mathbb{R}^2)^2}\widetilde{u}
$$
is nondecreasing for (suitably regular) solutions $u$ of the heat equation. Comparing the limiting behaviour of this as $t\rightarrow 0$ and $t\rightarrow\infty$ is quickly seen to recover \eqref{sharpstrich}; see \cite{BBCH} for the origins of this argument.

The main purpose of this paper is to explore how these algebraic closure properties may be adapted to handle more sophisticated \emph{systems} of differential inequalities featuring natural log-convexity estimates, such as the well-known Li--Yau gradient estimate. This will allow us to generate wider classes of monotone quantities which rely on such additional convexity ingredients. This approach is already touched upon in \cite{BB}, and implicitly in  \cite{BCCT}.

\section{Algebraic properties of systems of differential inequalities}\label{section:systems}
As we have seen through the algebraic closure properties described so far, classes of supersolutions of the heat equation \eqref{genheat} exhibit much less rigid structure than classes of genuine solutions. While focusing exclusively on supersolutions has proved to be of some benefit from the point of view of systematically generating monotone quantities, it does unfortunately require us to abandon certain subtle and potentially useful structural properties enjoyed by genuine solutions.
In particular, it is well known that a genuine solution of \eqref{genheat} satisfies the log-convexity estimate
\begin{equation}\label{liyau}
D^2(\log u)\geq -\frac{A}{2t}
\end{equation}
(in the sense that $D^2(\log u)+\frac{A}{2t}$ is positive semidefinite), a statement that is equivalent to the log-convexity of $u/H_{A,t}$, where
\begin{equation}\label{HeatKer}
H_{A,t}(x):=\det\left(\frac{A}{4\pi t}\right)e^{-\frac{\langle Ax,x\rangle}{4t}}
\end{equation}
is the heat kernel. We note that on taking the trace of this inequality we obtain the so-called Li--Yau\footnote{This inequality is naturally formulated in the setting of Riemannian manifolds with nonnegative Ricci curvature, and many of our conclusions -- being local in nature -- would appear to hold in such generality. We do not pursue this.} gradient estimate \cite{LY}
\begin{equation}\label{liyau1}
\Delta(\log u)\geq-\frac{\tr A}{2t};
\end{equation}
see \cite{CM} for further discussion of this inequality in the setting of semigroup interpolation.
As may be expected given the explicit role of the heat kernel in \eqref{liyau}, simple examples show that a supersolution need not satisfy the Li--Yau estimate \eqref{liyau1} (and thus \eqref{liyau}); for example, direct calculation reveals that
\begin{equation}\label{cleverexample}
u(t,x):=
\frac{1}{(4\pi t)^{n/2}} \big(e^{\frac{|x|^2}{4t}}+e^{\frac{|x-1|^2}{4t}}\big)^{-1},
 \end{equation}
which is merely a harmonic mean of translated isotropic heat kernels, is a supersolution of $\partial_t u=\Delta u$ (\eqref{genheat} with $A$ being the identity) for which \eqref{liyau1}, and thus \eqref{liyau}, fails for \emph{every} $(t,x)\in\mathbb{R}_+\times\mathbb{R}^n$.

However, in surprisingly many cases the estimate \eqref{liyau} (or alternatively \eqref{liyau1}) may simply be \emph{imposed} on supersolutions, while maintaining or extending the desired algebraic closure properties. This leads to algebraic closure properties of a system of differential inequalities incorporating both \eqref{supersol} and \eqref{liyau}, and are described in Subsections \ref{system:sums}--\ref{system:convolutions} below. For ease of comparison, these are structured as in Subsections \ref{sums}--\ref{convolutions} in the previous section. Of course these system closure properties generate a variety of monotone quantities that are not generated by those in Section \ref{introduction}.

As may be expected, the first three closure properties are straightforward to verify and are left to the reader.
\subsection{Positive linear combinations}\label{system:sums} Let $\lambda_1,\lambda_2 > 0$ and $\widetilde{u}:=\lambda_1 u_1+\lambda_2 u_2$. If
\begin{equation*}
\partial_t u_j \geq\dive(A^{-1}\nabla u_j) \qquad \text{and} \qquad D^2(\log u_j) \geq -\frac{A}{2t}
\end{equation*}
for $j=1,2$, then
\begin{equation*}
\partial_t \widetilde{u} \geq \dive(A^{-1}\nabla \widetilde{u}) \qquad \text{and} \qquad D^2(\log \widetilde{u}) \geq -\frac{A}{2t}.
\end{equation*}
This follows from the elementary fact that a positive linear superposition of log-convex functions is log-convex.
It should be noticed that this closure property \textit{decouples} in the sense that
$$
\partial_t u_j \geq\dive(A^{-1}\nabla u_j), \quad j=1,2 \quad \Rightarrow \quad \partial_t \widetilde{u} \geq \dive(A^{-1}\nabla \widetilde{u})
$$
and
$$
D^2(\log u_j) \geq -\frac{A}{2t}, \quad j=1,2 \quad\Rightarrow
\quad D^2(\log \widetilde{u}) \geq -\frac{A}{2t}
$$
independently. Our next two properties decouple in a similar way.
\subsection{Tensor products}\label{system:tensorproducts}
Let $\widetilde{u}:=u_1\otimes u_2$. If
\begin{equation*}
\partial_t u_j \geq\dive(A_j^{-1}\nabla u_j) \qquad \text{and} \qquad D^2(\log u_j) \geq -\frac{A_j}{2t}
\end{equation*}
for $j=1,2$, then
\begin{equation*}
\partial_t \widetilde{u} \geq \dive((A_1 \oplus A_2)^{-1}\nabla \widetilde{u}) \qquad \text{and} \qquad D^2(\log \widetilde{u}) \geq -\frac{A_1 \oplus A_2}{2t}.
\end{equation*}
\subsection{Compositions with linear transformations}\label{system:compositions}
Let $\widetilde{u}:=u\circ L$, where $L$ is an invertible linear transformation on $\mathbb{R}^n$. If
\begin{equation*}
\partial_t u \geq\dive(A^{-1}\nabla u) \qquad \text{and} \qquad D^2(\log u) \geq -\frac{A}{2t}
\end{equation*}
then
\begin{equation*}
\partial_t \widetilde{u} \geq\dive((L^*AL)^{-1}\nabla \widetilde{u}) \qquad \text{and} \qquad D^2(\log \widetilde{u}) \geq -\frac{L^*AL}{2t}.
\end{equation*}

\subsection{Concave images}\label{system:concavecase} In this context the desired closure property can fail unless we impose a further condition on the concave function $B$.
\begin{theorem} \label{t:lambdazero}
Suppose $B:\mathbb{R}_+^m\rightarrow\mathbb{R}_+$ is concave and increasing in each variable. If $u_j:\mathbb{R}_+\times\mathbb{R}^{n}\rightarrow\mathbb{R}_+$ is given such that
\[
\partial_t u_j\geq \dive(A^{-1}\nabla u_j)
\]
for all $j=1,\ldots,m$, then $\widetilde{u}:=B(u_1,\hdots,u_m)$ satisfies
\[
\partial_t \widetilde{u}\geq \dive(A^{-1}\nabla \widetilde{u}).
\]
Moreover, if the function $\Theta:\mathbb{R}^m\rightarrow\mathbb{R}$, given by
\begin{equation} \label{e:Thetadefn}
\Theta(x_1,\hdots,x_m):=\log B(e^{x_1},\hdots,e^{x_m})
\end{equation}
is convex and
\[
D^2(\log u_j)\geq -\frac{A}{2t}
\]
for all $j=1,\ldots,m$, then
$$
D^2(\log \widetilde{u})\geq -\frac{A}{2t}.
$$
\end{theorem}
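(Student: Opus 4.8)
The plan is to prove the two assertions of Theorem~\ref{t:lambdazero} separately, since the first (preservation of supersolutions) is exactly Closure Property~\ref{concavecase} and requires nothing new. For the first part I would simply invoke the identity \eqref{e:concaveimages}: the term $\sum_j \partial_j B(u)(\partial_t u_j - \dive(A^{-1}\nabla u_j))$ is nonnegative since $B$ is increasing and each $u_j$ is a supersolution, and the term $-\sum_\ell \langle D^2B(u)X^\ell, X^\ell\rangle$ is nonnegative since $B$ is concave. The real content is the second assertion, concerning the Hessian bound $D^2(\log\widetilde u)\geq -\frac{A}{2t}$.

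For the Hessian bound, the natural approach is to write $\log\widetilde u = \Theta(\log u_1,\ldots,\log u_m)$ where $\Theta$ is as in \eqref{e:Thetadefn}, and then differentiate twice in the spatial variables. Setting $v_j := \log u_j$, the chain rule gives, for any fixed direction $\xi\in\mathbb{R}^n$,
\[
\langle D^2(\log\widetilde u)\xi,\xi\rangle = \sum_{j=1}^m \partial_j\Theta(v)\,\langle D^2 v_j\,\xi,\xi\rangle + \sum_{j,k=1}^m \partial_j\partial_k\Theta(v)\,\langle\nabla v_j,\xi\rangle\langle\nabla v_k,\xi\rangle.
\]
The second sum is $\geq 0$ because $\Theta$ is convex (this is precisely where the extra hypothesis enters). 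For the first sum, the hypothesis $D^2 v_j \geq -\frac{A}{2t}$ gives $\langle D^2 v_j\,\xi,\xi\rangle \geq -\frac{1}{2t}\langle A\xi,\xi\rangle$, and since $\partial_j\Theta\geq 0$ (equivalently $\partial_j B\geq 0$, because $\partial_j\Theta(v) = \frac{e^{x_j}\partial_j B(e^x)}{B(e^x)}$ with all factors positive), we may sum these inequalities. The outcome is
\[
\langle D^2(\log\widetilde u)\xi,\xi\rangle \geq -\frac{1}{2t}\Bigl(\sum_{j=1}^m \partial_j\Theta(v)\Bigr)\langle A\xi,\xi\rangle,
\]
so it remains to check that $\sum_{j=1}^m \partial_j\Theta(v) \leq 1$.

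**The main obstacle** is precisely this last inequality, $\sum_{j=1}^m \partial_j\Theta \leq 1$ — that is, that the gradient of $\Theta$ lies in the standard simplex. I would derive it from the concavity and monotonicity of $B$ together with a homogeneity-type observation: if $B$ were $1$-homogeneous then $\Theta$ would satisfy $\Theta(x+c\mathbf{1}) = \Theta(x)+c$, forcing $\sum_j\partial_j\Theta\equiv 1$; in general, concavity of $B$ gives $B(\lambda x)\geq \lambda B(x)$ for $\lambda\geq 1$ only after using $B\geq 0$ and $B(0)\geq 0$, which yields $\Theta(x+c\mathbf 1)\leq\Theta(x)+c$ for $c\geq 0$, i.e. $\sum_j\partial_j\Theta(x)\leq 1$; combined with $\partial_j\Theta\geq 0$ this is exactly what is needed. (One checks the borderline case: equality $\sum_j\partial_j\Theta=1$ is attained by $B(x)=x_1^{p_1}\cdots x_m^{p_m}$, for which $\Theta$ is linear, consistent with Closure Property~\ref{anisotropiccase}.) Assembling these pieces, $\langle D^2(\log\widetilde u)\xi,\xi\rangle \geq -\frac{1}{2t}\langle A\xi,\xi\rangle$ for all $\xi$, which is the claim. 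A minor technical point to address is the regularity of $\widetilde u$ and the validity of differentiating under the composition, but this is routine given sufficient smoothness of $B$ and the $u_j$.
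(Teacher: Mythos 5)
Your argument is correct and coincides with the paper's: Theorem \ref{t:lambdazero} is the $\lambda=0$ case of Theorem \ref{t:generalconcavity}, whose proof uses exactly your chain-rule identity for $D^2(\log\widetilde{u})$, the convexity of $\Theta$ to discard the second-order term, the nonnegativity of each $\partial_j\Theta$, and the bound $\sum_{j}\partial_j\Theta\leq 1$ (equivalently $\sum_j x_j\partial_jB(x)\leq B(x)$, which the paper likewise deduces from concavity together with $B(0)\geq 0$). One inequality sign is reversed in your write-up --- concavity with $B(0)\geq 0$ gives $B(\lambda x)\leq\lambda B(x)$ for $\lambda\geq 1$, not $\geq$ --- but the consequence you actually use, $\Theta(x+c\mathbf{1})\leq\Theta(x)+c$ for $c\geq 0$, is the correct one, so this is only a typo.
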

In the absence of the convexity of $\Theta$, the second part of the closure property in Theorem \ref{t:lambdazero} may fail dramatically, as our next result demonstrates. For simplicity we restrict our attention to $m=2$.
\begin{theorem}\label{classicaliffthm}
If $B:\mathbb{R}^2_+\rightarrow\mathbb{R}_+$ is increasing and homogeneous of degree $1$ then the log-convexity inequality \eqref{liyau} is preserved under $B$ if and only if $\Theta$ is convex. Moreover, if the scalar function $h:=\Theta(0,\cdot)$ is strictly concave, then there exist solutions $u_1,u_2 : \mathbb{R}_+ \times \mathbb{R}^n \to \mathbb{R}_+$ for which $\widetilde{u}:=B(u_1,u_2)$ fails to satisfy \eqref{liyau} for all $(t,x)$.
\end{theorem}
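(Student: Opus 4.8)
The plan is to reduce everything to the one‑dimensional profile of $B$ transverse to the diagonal. Since $B$ is homogeneous of degree $1$, the function $\Theta$ satisfies $\Theta(x_1+c,x_2+c)=\Theta(x_1,x_2)+c$, so writing $h(s):=\Theta(0,s)$ we get $\Theta(x_1,x_2)=x_1+h(x_2-x_1)$. A one–line computation then gives
\[
D^2\Theta(x_1,x_2)=h''(x_2-x_1)\begin{pmatrix}1&-1\\-1&1\end{pmatrix},
\]
and since $\begin{pmatrix}1&-1\\-1&1\end{pmatrix}$ is positive semidefinite, $\Theta$ is convex if and only if $h$ is convex. We will also use two elementary facts: $B$ increasing gives $\partial_1\Theta,\partial_2\Theta\geq0$, and $B$ being $1$-homogeneous gives, via Euler's relation, $\partial_1\Theta+\partial_2\Theta\equiv1$.

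The engine of the proof is a chain–rule identity. Writing $v_j:=\log u_j$, so that $\log\widetilde u=\Theta(v_1,v_2)$, one finds for any $\xi\in\mathbb{R}^n$ that
\[
\langle D^2(\log\widetilde u)\,\xi,\xi\rangle=\sum_{j=1}^2\partial_j\Theta(v_1,v_2)\,\langle D^2v_j\,\xi,\xi\rangle+h''(v_2-v_1)\bigl(\langle\nabla v_1,\xi\rangle-\langle\nabla v_2,\xi\rangle\bigr)^2,
\]
the last term being exactly $\langle D^2\Theta(v_1,v_2)\eta,\eta\rangle$ with $\eta:=(\langle\nabla v_1,\xi\rangle,\langle\nabla v_2,\xi\rangle)$. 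The ``if'' direction is then immediate: if $D^2v_j\geq-\tfrac{A}{2t}$ for $j=1,2$, then $\partial_j\Theta\geq0$ and $\partial_1\Theta+\partial_2\Theta=1$ force the sum to be $\geq-\tfrac{\langle A\xi,\xi\rangle}{2t}$, while convexity of $h$ makes the final term nonnegative; hence $D^2(\log\widetilde u)\geq-\tfrac{A}{2t}$. (This argument uses nothing about concavity of $B$, so it is genuinely more general than the relevant part of Theorem~\ref{t:lambdazero}; for instance it covers $B(y_1,y_2)=(y_1^2+y_2^2)^{1/2}$, which is not concave.)

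For the converse and the ``moreover'' claim I would test with shifted heat kernels. Fix $a_1\neq a_2$ and take $u_j(t,x)=H_{A,t}(x-a_j)$; these are genuine positive solutions of \eqref{genheat}, and $v_j=\log u_j$ satisfies $D^2v_j=-\tfrac{A}{2t}$ exactly, $\nabla v_j=-\tfrac{1}{2t}A(x-a_j)$, so $\langle\nabla v_1,\xi\rangle-\langle\nabla v_2,\xi\rangle=-\tfrac{1}{2t}\langle A(a_2-a_1),\xi\rangle$ and $v_2-v_1$ is a non‑constant affine function of $x$. Substituting into the identity above,
\[
\langle D^2(\log\widetilde u)\,\xi,\xi\rangle=-\frac{\langle A\xi,\xi\rangle}{2t}+h''(v_2-v_1)\,\frac{\langle A(a_2-a_1),\xi\rangle^2}{4t^2}.
\]
If $h$ is not convex, choose $s_0$ with $h''(s_0)<0$; since $v_2-v_1$ ranges over all of $\mathbb{R}$ as $x$ varies, pick $(t,x)$ with $v_2(t,x)-v_1(t,x)=s_0$ and $\xi$ with $\langle A(a_2-a_1),\xi\rangle\neq0$ (possible as $A$ is invertible); then the right–hand side is strictly below $-\tfrac{\langle A\xi,\xi\rangle}{2t}$, so \eqref{liyau} fails for $\widetilde u$ at $(t,x)$. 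This is the contrapositive of the ``only if'' direction. For the last assertion, if $h$ is strictly concave then $h''<0$ throughout, and the displayed identity shows that for this single fixed pair $u_1,u_2$ the inequality \eqref{liyau} fails at \emph{every} $(t,x)$.

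The main obstacle is the bookkeeping in the chain–rule identity, namely checking that the full second–order term collapses to $h''(v_2-v_1)(\langle\nabla v_1,\xi\rangle-\langle\nabla v_2,\xi\rangle)^2$; this collapse is exactly where $1$-homogeneity enters, through $D^2\Theta\propto\begin{pmatrix}1&-1\\-1&1\end{pmatrix}$. One also has to notice why shifted heat kernels are the right test objects: they saturate $D^2v_j=-\tfrac{A}{2t}$ with no slack while still having distinct gradients, which is precisely what exposes a negative eigenvalue of $D^2(\log\widetilde u)+\tfrac{A}{2t}$. Minor care is needed with regularity (one assumes $B$, hence $h$, to be $C^2$ where derivatives are taken) and with the reading of ``strictly concave'': the failure at literally every $(t,x)$ uses $h''<0$ pointwise.
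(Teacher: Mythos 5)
Your proposal is correct and follows essentially the same route as the paper: the same reduction $\Theta(x_1,x_2)=x_1+h(x_2-x_1)$ via $1$-homogeneity, the same chain-rule identity for $D^2(\log\widetilde u)$ combined with Euler's relation $\partial_1\Theta+\partial_2\Theta\equiv 1$, and the same counterexamples built from shifted heat kernels saturating \eqref{liyau}. The only (immaterial) differences are that you test the full quadratic form against a direction $\xi$ where the paper takes the trace, and that you fix $a_1\neq a_2$ and vary $(t,x)$ where the paper fixes $(t,x)$ and chooses $a_1,a_2$.
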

For the concave homogeneous function $B(x_1,x_2)=(x_1^{-1}+x_2^{-1})^{-1}$, a routine calculation reveals that
\begin{equation*}
h''(x) = -\frac{e^{x}}{(1+e^{x})^2} <0
\end{equation*}
for all $x\in\mathbb{R}$. This explains why the particular supersolution $u$ given by \eqref{cleverexample} fails to satisfy the Li--Yau estimate \eqref{liyau} everywhere. However, if we are prepared to sacrifice the full closure property, and focus only on the immediate monotone quantities, then naturally such considerations involving $\Theta$ cease to be relevant.

The closure property in Theorem \ref{t:lambdazero} also decouples as in Closure Properties \ref{system:sums}--\ref{system:compositions}. Our next closure property, which does \textit{not} decouple, tells us that we may relax the concavity hypothesis on $B$ provided we mitigate this with some growth in $t$.
\begin{theorem} \label{t:generalconcavity}
Suppose $B:\mathbb{R}_+^m\rightarrow\mathbb{R}_+$ is twice differentiable, increasing in each variable,
and that there exists $\lambda \geq 0$ such that
\begin{equation} \label{e:generalconcavity}
D^2B(x) \leq \lambda \diag\bigg(\frac{\partial_1B(x)}{x_1},\cdots,\frac{\partial_mB(x)}{x_m}\bigg)\qquad \mbox{for all $x \in \mathbb{R}_+^m$}.
\end{equation}
If $u_j:\mathbb{R}_+\times\mathbb{R}^{n}\rightarrow\mathbb{R}_+$ is given such that
\[
\partial_t u_j  \geq  \dive(A^{-1}\nabla u_j) \quad \mbox{ and } \quad D^2(\log u_j)\geq -\frac{A}{2t}
\]
for all $j=1,\ldots,m$, then $\widetilde{u}:=t^{\frac{\lambda n}{2}}B(u_1,\hdots,u_m)$ satisfies
\[
\partial_t \widetilde{u}\geq\frac{1}{1+\lambda} \dive(A^{-1}\nabla \widetilde{u}).
\]
Moreover, if the function $\Theta:\mathbb{R}^m\rightarrow\mathbb{R}$ given by \eqref{e:Thetadefn} is convex, then
$$
\frac{1}{1+\lambda} D^2(\log \widetilde{u})\geq -\frac{A}{2t}.
$$
\end{theorem}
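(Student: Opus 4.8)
The plan is to reprise the computation underlying identity \eqref{e:concaveimages}, now using the Li--Yau estimates imposed on the $u_j$ to absorb the error term produced by the failure of concavity of $B$. Throughout, write $u=(u_1,\dots,u_m)$, $v:=B(u)$, and $r_j:=\partial_t u_j-\dive(A^{-1}\nabla u_j)\geq0$. The single structural consequence of \eqref{e:generalconcavity} that I expect to need — and the step I expect to be the crux — is the Euler-type inequality
\begin{equation}\label{e:eulerplan}
\sum_{j=1}^m x_j\partial_j B(x)\leq(1+\lambda)B(x)\qquad\text{for all }x\in\mathbb{R}_+^m,
\end{equation}
which is precisely what forces the exponent $t^{\frac{\lambda n}{2}}$ in $\widetilde u$ and what makes both conclusions close up.

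I would establish \eqref{e:eulerplan} by restricting $B$ to a ray. Fixing $x\in\mathbb{R}_+^m$ and setting $g(s):=B(sx)$ for $s>0$, one has $g'(s)=\sum_j x_j\partial_j B(sx)\geq0$ and $g''(s)=\langle D^2B(sx)x,x\rangle$, so applying \eqref{e:generalconcavity} at the point $sx$ gives the differential inequality $g''(s)\leq\frac{\lambda}{s}g'(s)$. Integrating over $[s,1]$ for $s\in(0,1]$ yields $g'(s)\geq g'(1)s^{\lambda}$, hence $g(1)\geq\int_0^1 g'(r)\,\mathrm{d}r\geq\frac{g'(1)}{1+\lambda}$ since $g\geq0$; as $g(1)=B(x)$ and $g'(1)=\sum_j x_j\partial_j B(x)$, this is \eqref{e:eulerplan}.

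For the parabolic inequality I would start from \eqref{e:concaveimages}, bounding the Hessian term via \eqref{e:generalconcavity} and $\sum_\ell(X^\ell_j)^2=\langle A^{-1}\nabla u_j,\nabla u_j\rangle$ by $\lambda\sum_j\frac{\partial_j B(u)}{u_j}\langle A^{-1}\nabla u_j,\nabla u_j\rangle$. The key move is then to insert the Li--Yau estimate for each $u_j$ in the form $\langle A^{-1}\nabla u_j,\nabla u_j\rangle/u_j\leq\dive(A^{-1}\nabla u_j)+\frac{n}{2t}u_j$, which follows from $D^2(\log u_j)\geq-\frac{A}{2t}$ by conjugating with $A^{-1/2}$, taking traces, and using $\tr(A^{-1}D^2\log u_j)=\dive(A^{-1}\nabla u_j)/u_j-\langle A^{-1}\nabla u_j,\nabla u_j\rangle/u_j^2$. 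Substituting back, replacing $\dive(A^{-1}\nabla u_j)$ by $\partial_t u_j-r_j$, and discarding the resulting nonnegative multiple of $\sum_j\partial_j B(u)\,r_j$, should leave the clean inequality
\[
(1+\lambda)\partial_t v-\dive(A^{-1}\nabla v)\geq-\frac{\lambda n}{2t}\sum_{j=1}^m u_j\partial_j B(u).
\]
Since $\widetilde u=t^{\frac{\lambda n}{2}}v$ satisfies $\partial_t\widetilde u=t^{\frac{\lambda n}{2}}(\partial_t v+\frac{\lambda n}{2t}v)$ and $\dive(A^{-1}\nabla\widetilde u)=t^{\frac{\lambda n}{2}}\dive(A^{-1}\nabla v)$, the target $\partial_t\widetilde u\geq\frac{1}{1+\lambda}\dive(A^{-1}\nabla\widetilde u)$ follows once one combines the clean inequality with the nonnegativity of $\frac{\lambda n}{2t}\big((1+\lambda)v-\sum_j u_j\partial_j B(u)\big)$, which is exactly \eqref{e:eulerplan}.

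For the log-convexity statement, note that $\log\widetilde u=\frac{\lambda n}{2}\log t+\log B(u)$, so its spatial Hessian equals that of $\log B(u)=\Theta(\log u_1,\dots,\log u_m)$. The chain rule gives
\[
D^2(\log\widetilde u)=\sum_{j,k}\partial_j\partial_k\Theta(\psi)\,\nabla\psi_j\otimes\nabla\psi_k+\sum_{j}\partial_j\Theta(\psi)\,D^2\psi_j,\qquad\psi_j:=\log u_j;
\]
convexity of $\Theta$ makes the first sum positive semidefinite, and since $\partial_j\Theta\geq0$ (as $B$ is increasing) the estimates $D^2\psi_j\geq-\frac{A}{2t}$ bound the second sum below by $-\frac{A}{2t}\sum_j\partial_j\Theta(\psi)$. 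Finally $\sum_j\partial_j\Theta(\psi)=\big(\sum_j u_j\partial_j B(u)\big)/B(u)\leq1+\lambda$ by \eqref{e:eulerplan}, so $D^2(\log\widetilde u)\geq-\frac{(1+\lambda)A}{2t}$, which is the claim. Everything here specialises to Theorem \ref{t:lambdazero} when $\lambda=0$, which is a useful consistency check; and apart from \eqref{e:eulerplan}, the calculation is a routine reorganisation of the identities already used for Closure Property \ref{concavecase}.
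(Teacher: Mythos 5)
Your proposal is correct and follows essentially the same route as the paper: the identity \eqref{e:concaveimages} combined with the traced Li--Yau bound $|A^{-1/2}\nabla u_j|^2/u_j\leq\dive(A^{-1}\nabla u_j)+\frac{n}{2t}u_j$, with both conclusions closed by the Euler-type consequence $(1+\lambda)B(x)\geq\sum_j x_j\partial_j B(x)$ of \eqref{e:generalconcavity}, which the paper likewise obtains by integrating \eqref{e:generalconcavity} along rays. The only cosmetic differences are that you factor out $t^{\lambda n/2}$ at the end rather than differentiating $\widetilde{u}$ directly, and you derive the Euler inequality by a double integration of $g''\leq\frac{\lambda}{s}g'$ rather than the paper's single integration of the radial derivative of $(1+\lambda)B-\sum_j x_j\partial_j B$.
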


Of course such non-autonomous mappings $u\mapsto\widetilde{u}$ take us out of the realm of monotone \textit{functionals}
$\mathcal{F}$, as the following corollary clarifies.
\begin{corollary}\label{c:generalconcavity}
Suppose $B:\mathbb{R}_+^m\rightarrow\mathbb{R}_+$ is twice differentiable, increasing in each variable,
and that there exists $\lambda \geq 0$ such that \eqref{e:generalconcavity} holds. Then, for suitable solutions $u_1,\hdots,u_m : \mathbb{R}_+ \times \mathbb{R}^n \to \mathbb{R}_+$ of the heat equation $\partial_t u_j  =  \dive(A^{-1}\nabla u_j)$, the function
$$
t\mapsto t^{\frac{\lambda n}{2}}\int_{\mathbb{R}^n}B(u_1(t,\cdot),\ldots,u_m(t,\cdot))
$$
is nondecreasing.
\end{corollary}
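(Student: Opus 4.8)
The plan is to read this off directly from Theorem \ref{t:generalconcavity}, together with the elementary divergence-theorem observation recorded at the beginning of Section \ref{introduction}. First I would recall that a genuine (and suitably regular) positive solution $u_j$ of $\partial_t u_j=\dive(A^{-1}\nabla u_j)$ automatically satisfies the log-convexity estimate \eqref{liyau}, that is $D^2(\log u_j)\geq -\frac{A}{2t}$ --- this is the well-known property, equivalent to the log-convexity of $u_j/H_{A,t}$, noted just before \eqref{liyau}. Consequently each such $u_j$ satisfies the full set of hypotheses imposed on the inputs of Theorem \ref{t:generalconcavity}, and applying that theorem to $\widetilde{u}:=t^{\frac{\lambda n}{2}}B(u_1,\ldots,u_m)$ --- a positive function, since $B$ maps $\mathbb{R}_+^m$ into $\mathbb{R}_+$ --- yields
$$
\partial_t\widetilde{u}\geq\frac{1}{1+\lambda}\dive(A^{-1}\nabla\widetilde{u})=\dive\big(((1+\lambda)A)^{-1}\nabla\widetilde{u}\big).
$$
Hence $\widetilde{u}$ is a supersolution of the generalised heat equation \eqref{genheat} associated with the positive-definite diffusion matrix $(1+\lambda)A$. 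Note that only the first conclusion of Theorem \ref{t:generalconcavity} is needed here, so the convexity of the function $\Theta$ in \eqref{e:Thetadefn} is irrelevant to the corollary.

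It then remains to invoke the first observation of Section \ref{introduction} with $A$ there replaced by $(1+\lambda)A$: for any (suitably regular) supersolution of \eqref{genheat} the spatial integral is nondecreasing, because
$$
\frac{\mathrm{d}}{\mathrm{d}t}\int_{\mathbb{R}^n}\widetilde{u}(t,x)\,\mathrm{d}x=\int_{\mathbb{R}^n}\partial_t\widetilde{u}(t,x)\,\mathrm{d}x\geq\frac{1}{1+\lambda}\int_{\mathbb{R}^n}\dive(A^{-1}\nabla\widetilde{u})(t,x)\,\mathrm{d}x=0.
$$
Since $\int_{\mathbb{R}^n}\widetilde{u}(t,\cdot)=t^{\frac{\lambda n}{2}}\int_{\mathbb{R}^n}B(u_1(t,\cdot),\ldots,u_m(t,\cdot))$, this is precisely the assertion of the corollary.

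The one substantive point --- and the reason for the qualifier ``suitable'' in the statement --- is the justification of the two interchanges in the last display: differentiation under the integral sign, and the vanishing of $\int_{\mathbb{R}^n}\dive(A^{-1}\nabla\widetilde{u})$ by the divergence theorem. Both are valid once $\widetilde{u}$ and $\nabla\widetilde{u}$ decay sufficiently rapidly in the spatial variable (with enough local regularity), uniformly for $t$ in compact subsets of $(0,\infty)$; this holds, for instance, when the $u_j$ arise from bounded initial data with sufficiently fast decay --- so that Gaussian upper bounds are available for $u_j$ and its spatial derivatives --- and $B$ has at most polynomial growth. I expect this to be the only obstacle, and a routine one, the algebraic content of the corollary being entirely contained in Theorem \ref{t:generalconcavity}.
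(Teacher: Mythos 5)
Your proposal is correct and is exactly the argument the paper intends (the corollary is left implicit there): genuine solutions satisfy the log-convexity estimate \eqref{liyau}, so Theorem \ref{t:generalconcavity} makes $t^{\frac{\lambda n}{2}}B(u_1,\ldots,u_m)$ a supersolution of \eqref{genheat} with diffusion matrix $(1+\lambda)A$, and the divergence-theorem observation \eqref{obs1} then gives the monotonicity. Your remarks on the regularity/decay needed to justify differentiating under the integral, accounting for the word ``suitable'', match the paper's treatment of such technicalities.
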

\begin{example}\label{example:simplest}
The simplest interesting example of Theorem \ref{t:generalconcavity} is the case $m=1$, $B(x) = x^p$ and $\lambda=p-1$ for some
$p\geq 1$. This case follows immediately from the identities
$$
\frac{\partial_t\widetilde{u}-\dive(p^{-1}A^{-1}\nabla\widetilde{u})}{\widetilde{u}}=
p\left(\frac{\partial_t u-\dive(A^{-1}\nabla u)}{u}\right)+(p-1)\tr\left(A^{-1}\left(D^2(\log u)+\frac{A}{2t}\right)\right)
$$
and
$$
D^2(\log\widetilde{u})=pD^2(\log u),
$$
from which it is clear that the claimed closure property may not be decoupled.
In this particular case Corollary \ref{c:generalconcavity} may be found in \cite{BCCT}.
\end{example}
\begin{example}\label{example:simple}
Let $p,q \in \mathbb{R}$ be such that $p \geq \max\{1,q\}$, and let $B : \mathbb{R}^2_+ \to \mathbb{R}_+$ be given by
$$
B(x) = \|x\|_{\ell^q}^p.
$$
Then
\begin{align*}
\frac{1}{p}\bigg( D^2B(x) - &(p-1)\diag\bigg(\frac{\partial_1B(x)}{x_1},\frac{\partial_2B(x)}{x_2}\bigg)\bigg)  \\
& = (q-p)B(x)^{\frac{p-2q}{p}}(x_1x_2)^{q-2} \left(
\begin{array}{ccccc}
x_2^2 & -x_1x_2 \\
-x_1x_2 & x_1^2
\end{array}
\right)
\end{align*}
which means $B$ satisfies the hypotheses of Theorem \ref{t:generalconcavity} with $\lambda = p-1$.
Consequently, for such $p$ and $q$, and for a pair of solutions $u_1,u_2 : \mathbb{R}_+ \times \mathbb{R}^n \to \mathbb{R}_+$ of the same heat equation, the quantity
\[
t \mapsto t^{\frac{(p-1)n}{2}} \int_{\mathbb{R}^n} (u_1(t,x)^q + u_2(t,x)^q)^{\frac{p}{q}}\,\mathrm{d}x
\]
is nondecreasing.
\end{example}
Example \ref{example:simple} may also be seen by a two-fold application of the simpler Example \ref{example:simplest}, combined with Closure Property \ref{system:sums}. This alternative approach illustrates well the purpose of our algebraic framework. A further such example, combining Closure Properties \ref{system:sums}--\ref{system:concavecase}, leads to the monotonicity of the quantity
$$
t\mapsto t^{\alpha-\frac{1}{2}}\|e^{is\Delta}(u^{\alpha})\|_{L^4_{s,x}(\mathbb{R}\times\mathbb{R}^2)}
$$
for all $\alpha\geq 1/2$ and (suitably regular) solutions $u$ of the heat equation on $\mathbb{R}^2$; see Section \ref{introduction} for further discussion in the case $\alpha=1/2$.

\subsection{Anisotropic geometric means} \label{system:anisotropiccase}
\begin{theorem}\label{BLLY}
For each $j=1,\ldots,m$ suppose that $L_j:\mathbb{R}^n\rightarrow\mathbb{R}^{n_j}$ is a surjective linear transformation and $p_j$ a nonnegative exponent.
Suppose further that for each $j=1,\ldots,m$,
$A_j$ is a positive definite linear transformation on $\mathbb{R}^{n_j}$ such that
$M$, defined in \eqref{GBL},
is invertible, and
\begin{equation}\label{BLconditionineq}
L_jM^{-1}L_j^*\leq A_j^{-1}.
\end{equation}
Let $$\widetilde{u}:=t^{\frac{1}{2}\left(\sum_j p_jn_j-n\right)}\prod_{j=1}^m(u_j\circ L_j)^{p_j}.$$
If
$$
\partial_t u_j\geq\dive(A_j^{-1} \nabla u_j)\quad\mbox{ and }\quad D^2(\log u_j)\geq-\frac{A_j}{2t}
$$
for all $j=1,\ldots,m$, then
$$
\partial_t\widetilde{u}\geq\dive(M^{-1}\nabla\widetilde{u})\quad\mbox{ and }\quad D^2(\log \widetilde{u})\geq-\frac{M}{2t}.
$$
\end{theorem}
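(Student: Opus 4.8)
The plan is to pass to logarithmic variables and prove the two conclusions separately, the log-convexity bound being essentially immediate and the supersolution property requiring the real work. Throughout, write $v_j=\log u_j$ and $w=\log\widetilde u = c\log t+\sum_{j=1}^m p_j\, v_j\circ L_j$, where $c=\tfrac12\big(\sum_j p_jn_j-n\big)$. For the Hessian conclusion, $c\log t$ is independent of $x$, so the chain rule gives $D^2(\log\widetilde u)=\sum_j p_j L_j^*\big((D^2v_j)\circ L_j\big)L_j\ge\sum_j p_j L_j^*\big(-\tfrac{A_j}{2t}\big)L_j=-\tfrac{1}{2t}\sum_j p_jL_j^*A_jL_j=-\tfrac{M}{2t}$, using $D^2(\log u_j)\ge-\tfrac{A_j}{2t}$ and the definition \eqref{GBL} of $M$. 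This uses neither \eqref{BLconditionineq} nor the supersolution hypothesis, exactly as in Closure Properties \ref{system:compositions} and \ref{anisotropiccase}.

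For the supersolution conclusion, divide by $\widetilde u>0$: it suffices to prove $\partial_t w\ge\langle M^{-1}\nabla w,\nabla w\rangle+\tr(M^{-1}D^2w)$, and $\partial_t u_j\ge\dive(A_j^{-1}\nabla u_j)$ similarly reads $\partial_t v_j\ge\langle A_j^{-1}\nabla v_j,\nabla v_j\rangle+\tr(A_j^{-1}D^2v_j)$. Using $\partial_t w=\tfrac ct+\sum_j p_j(\partial_tv_j)\circ L_j$ together with the chain rule for $\nabla w$ and $D^2w$, and inserting the inequality for each $v_j$, the claim reduces to controlling two remainders: the \emph{trace term} $\sum_j p_j\tr\big((L_jM^{-1}L_j^*)(D^2v_j)\circ L_j\big)$ against $\sum_j p_j\tr\big(A_j^{-1}(D^2v_j)\circ L_j\big)$, and the \emph{gradient term} $\langle M^{-1}\nabla w,\nabla w\rangle$ against $\sum_j p_j\langle A_j^{-1}(\nabla v_j\circ L_j),\nabla v_j\circ L_j\rangle$.

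For the trace term — and this is where \eqref{liyau} is indispensable — write $(D^2v_j)\circ L_j=\big((D^2v_j)\circ L_j+\tfrac{A_j}{2t}\big)-\tfrac{A_j}{2t}$. Since $(D^2v_j)\circ L_j+\tfrac{A_j}{2t}\ge0$ by \eqref{liyau} and $A_j^{-1}-L_jM^{-1}L_j^*\ge0$ by \eqref{BLconditionineq}, the elementary fact that $\tr(PS)\le\tr(QS)$ whenever $S\ge0$ and $Q\ge P$ yields $\tr\big((L_jM^{-1}L_j^*)((D^2v_j)\circ L_j+\tfrac{A_j}{2t})\big)\le\tr\big(A_j^{-1}((D^2v_j)\circ L_j+\tfrac{A_j}{2t})\big)$. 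Summing with weights $p_j$ and using the identities $\sum_j p_j\tr\big((L_jM^{-1}L_j^*)A_j\big)=\tr\big(M^{-1}\sum_j p_jL_j^*A_jL_j\big)=\tr(M^{-1}M)=n$ and $\tr(A_j^{-1}A_j)=n_j$, the discrepancy between the two trace expressions comes out to exactly $\tfrac{1}{2t}\big(\sum_j p_jn_j-n\big)=\tfrac ct$, which is precisely cancelled by the $\tfrac ct$ produced by $\partial_t(c\log t)$. (The same computation incidentally forces $c\ge0$.) Thus $\partial_t w-\tr(M^{-1}D^2w)\ge\sum_j p_j\langle A_j^{-1}(\nabla v_j\circ L_j),\nabla v_j\circ L_j\rangle$.

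It remains to dominate $\langle M^{-1}\nabla w,\nabla w\rangle$ by this last sum. Setting $\eta_j=\nabla v_j\circ L_j$, $\xi=\sum_j p_jL_j^*\eta_j$ and $\zeta=M^{-1}\xi$, and choosing $w_j$ as in \eqref{defw} so that $A_jL_jw_j=\eta_j$, a direct expansion gives $\sum_j p_j\langle A_j^{-1}\eta_j,\eta_j\rangle-\langle M^{-1}\xi,\xi\rangle=\sum_j p_j\langle L_j^*A_jL_j(w_j-\zeta),w_j-\zeta\rangle\ge0$ — this is exactly the completing-the-square remainder appearing in the identity underlying Closure Property \ref{anisotropiccase} (equivalently, one applies Cauchy--Schwarz in the $A_j$-inner product and then in $j$). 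Combining this with the previous step gives $\partial_t w\ge\langle M^{-1}\nabla w,\nabla w\rangle+\tr(M^{-1}D^2w)$, completing the proof. I expect the only real obstacle to be organisational rather than conceptual: carrying the chain-rule bookkeeping and the dimension/$t$-power accounting cleanly enough that the stray $\tfrac ct$ cancels on the nose. The conceptual content is simply that \eqref{liyau} is precisely what converts the inequality \eqref{BLconditionineq} into a usable bound on $\tr(M^{-1}D^2w)$, after which everything is a weighted variant of the $A_j^{-1}=L_jM^{-1}L_j^*$ case.
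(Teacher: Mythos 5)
Your proof is correct and is essentially the paper's proof rewritten in logarithmic variables: the paper packages the supersolution claim into a single identity (extracted from Proposition 8.9 of \cite{BCCT}) whose nonnegative remainders are precisely your completing-the-square term $\sum_jp_j\langle L_j^*A_jL_j(w_j-\overline{w}),w_j-\overline{w}\rangle$ and your trace term $\sum_jp_j\tr\bigl((A_j^{-1}-L_jM^{-1}L_j^*)(D^2(\log u_j)+\tfrac{A_j}{2t})\bigr)$, with the same $c/t$ cancellation against the $t$-power prefactor, and the Hessian claim is handled by the same chain-rule identity. One remark worth recording: the identity as printed in the paper carries an additional term $\sum_jp_j\langle(A_j^{-1}-L_jM^{-1}L_j^*)A_jL_jw_j,A_jL_jw_j\rangle$ which your (correct) expansion of the gradient discrepancy shows should not be there — for instance with $m=1$, $L_1=I$, $p_1=p$ it would contribute $(p-1)\langle A^{-1}\nabla\log u,\nabla\log u\rangle$, absent from the directly verifiable identity of Example \ref{example:simplest} — though since that term is nonnegative its presence or absence does not affect the validity of the theorem.
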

As will be apparent from its proof, Theorem \ref{BLLY} decouples if and only if \eqref{BLconditionineq} holds with equality for each $j$, and under the scaling condition $\sum_j p_jn_j=n$. This closure property has implications beyond that of the simpler Closure Property \ref{anisotropiccase}. In particular, semigroup interpolation, beginning now at time $t=1$, generates the inequality
$$
\int_{\mathbb{R}^n}\prod_{j=1}^m f_j(L_jx)^{p_j}dx\leq\frac{\prod_{j=1}^m\det(A_j)^{{p_j}/2}}{\det(M)^{1/2}}\prod_{j=1}^m
\left(\int_{\mathbb{R}^{n_j}}f_j\right)^{p_j},
$$
where for each $j$, the input function $f_j$ takes the form $H_{A_j,1}*\mu_j$, where $H_{A_j,t}$ denotes the heat kernel defined by \eqref{HeatKer}, and $\mu_j$ is a positive finite Borel measure on $\mathbb{R}^{n_j}$. As the functions $f_j$ are each solutions to heat equations at time $t=1$, this may be interpreted as a certain \textit{regularised} (or \textit{discretised}) Brascamp--Lieb inequality that permits a sharp formulation. In \cite{BCCT} such inequalities are used to give a heat-flow proof of Lieb's fundamental theorem on the exhaustion of the general Brascamp--Lieb constant by centred gaussians.

\subsection{Convolution-based operations}\label{system:convolutions}
In the context of our convolution-based operations it continues to be necessary to restrict attention to isotropic flows, where $A$ is a positive multiple of the identity. (Indeed, if the following closure properties were to hold for an anisotropic flow, then semigroup interpolation would contradict the known characterisation of extremisers for either the forward or reverse Young's convolution inequalities.)
\begin{theorem}\label{BBLY}
Define $\widetilde{u}$ in terms of $u_1, u_2$ by $\widetilde{u}^{1/p}=u_1^{1/p_1}*u_2^{1/p_2}$ where $\frac{1}{p_1}+\frac{1}{p_2}=1+\frac{1}{p}$ and $p_1,p_2\geq 1$. Suppose $\sigma_1p_2p_2'=\sigma_2p_1p_1'$ and $\sigma p=\sigma_1 p_1+\sigma_2 p_2$. Then
\begin{enumerate}
\item
If
$$
\partial_t u_j\geq\sigma_j\Delta u_j
$$
for $j=1,2$, then
$$
\partial_t \widetilde{u}\geq \sigma\Delta \widetilde{u},
$$
\item
and if
$$
D^2(\log u_j)\geq-\frac{I}{2\sigma_jt}
$$
for $j=1,2$, then
$$
D^2(\log \widetilde{u})\geq -\frac{I}{2\sigma t},
$$
\end{enumerate}
where $I$ denotes the identity on $\mathbb{R}^n$.
\end{theorem}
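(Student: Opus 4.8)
The plan is to dispose of part (1) by citation and prove part (2) directly. Part (1) is exactly the convolution closure property for supersolutions established in \cite{BB} and recalled as Closure Property \ref{convolutions}, so it requires no further comment. For part (2) --- which, as the argument will show, uses neither part (1) nor the restriction $p_1,p_2\geq 1$ --- the first step is to recast both hypothesis and conclusion as statements about log-convexity in the spatial variable. I would put $v_j:=u_j^{1/p_j}$, so that $v_1*v_2=\widetilde u^{1/p}$, and set $a_j:=p_j\sigma_j t$ and $a:=p\sigma t$. Since $u/H_{A,t}$ is log-convex if and only if $D^2(\log u)\geq-\frac{A}{2t}$ (see \eqref{liyau}), applied here with $A=\sigma_j^{-1}I$ and $A=\sigma^{-1}I$, and since log-convexity is preserved under positive powers, the hypothesis $D^2(\log u_j)\geq-\frac{I}{2\sigma_j t}$ is equivalent to log-convexity in $x$ of
\[
\phi_j(x):=u_j(t,x)^{1/p_j}\,e^{|x|^2/(4a_j)},
\]
and the desired conclusion $D^2(\log\widetilde u)\geq-\frac{I}{2\sigma t}$ is equivalent to log-convexity of $\psi(x):=(v_1*v_2)(x)\,e^{|x|^2/(4a)}$. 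The key arithmetic observation is that the hypothesis $\sigma p=\sigma_1p_1+\sigma_2p_2$ is precisely the relation $a=a_1+a_2$.

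The second step is a Gaussian completion of the square. Writing $v_j=\phi_j\,e^{-|\cdot|^2/(4a_j)}$ and making the substitution $y=\tfrac{a_1}{a}x+z$ in $(v_1*v_2)(x)=\int v_1(y)v_2(x-y)\,\mathrm{d}y$ --- so that $x-y=\tfrac{a_2}{a}x-z$, using $a=a_1+a_2$ --- one expands the two squares and finds
\[
\frac{1}{4a_1}\Bigl|\tfrac{a_1}{a}x+z\Bigr|^2+\frac{1}{4a_2}\Bigl|\tfrac{a_2}{a}x-z\Bigr|^2=\frac{|x|^2}{4a}+\frac14\Bigl(\frac{1}{a_1}+\frac{1}{a_2}\Bigr)|z|^2,
\]
the cross terms in $\langle x,z\rangle$ cancelling exactly because $a=a_1+a_2$. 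Consequently
\[
\psi(x)=\int_{\mathbb{R}^n}\phi_1\Bigl(\tfrac{a_1}{a}x+z\Bigr)\,\phi_2\Bigl(\tfrac{a_2}{a}x-z\Bigr)\,e^{-\frac{1}{4}(\frac{1}{a_1}+\frac{1}{a_2})|z|^2}\,\mathrm{d}z.
\]

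The final step is immediate: for each fixed $z$, the maps $x\mapsto\phi_1(\tfrac{a_1}{a}x+z)$ and $x\mapsto\phi_2(\tfrac{a_2}{a}x-z)$ are log-convex, being compositions of log-convex functions with affine maps; their product is log-convex; and $\psi$ is therefore a positive superposition of log-convex functions, the weights $e^{-\frac{1}{4}(\frac{1}{a_1}+\frac{1}{a_2})|z|^2}$ being positive since $a_1,a_2>0$. Hence $\psi$ is log-convex --- this is the same elementary fact underlying Closure Property \ref{system:sums} --- which gives $D^2(\log\widetilde u)\geq-\frac{I}{2\sigma t}$. In particular part (2) decouples entirely from part (1).

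I expect no deep obstacle here: the only real care is in the bookkeeping that fixes the roles of the parameters and verifies the cancellation of the cross terms --- equivalently, that the Gaussian normalisations attached to $u_1$, $u_2$ and $\widetilde u$ are compatible precisely when $\sigma p=\sigma_1p_1+\sigma_2p_2$. As with Closure Property \ref{convolutions}, the genuinely technical point, which I would again defer to \cite{BB}, is to ensure that $\widetilde u$ is well-defined, positive and regular enough for the displayed manipulations and for the log-convexity assertion itself to be meaningful.
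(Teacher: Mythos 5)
Your proposal is correct, but your treatment of part (2) is genuinely different from the paper's. The paper proves part (2) by a direct Hessian computation: it writes $\partial_{ij}(\widetilde{u}^{1/p})$ in three ways according to which factor the derivatives fall on, forms a convex combination with weights $\lambda_1,\lambda_2$, chooses $\lambda_k=\bigl(\tfrac{1/p_k'}{1/p_1'+1/p_2'}\bigr)^2$ so that $\tfrac{\lambda_1}{\sigma_1p_1}+\tfrac{\lambda_2}{\sigma_2p_2}=\tfrac{1}{\sigma p}$, and then shows the leftover term $N$ is nonnegative by exhibiting $2N$ as a symmetrised double integral of a square (using $\lambda_1^{1/2}+\lambda_2^{1/2}=1$); this deliberately mirrors the distilled identity behind part (1) and the paper remarks on that parallelism. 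Your route instead factors out the Gaussians: with $\phi_j=u_j^{1/p_j}e^{|x|^2/(4a_j)}$, $a_j=p_j\sigma_j t$, $a=p\sigma t=a_1+a_2$, the completion-of-squares substitution $y=\tfrac{a_1}{a}x+z$ exhibits $\psi=(u_1^{1/p_1}*u_2^{1/p_2})e^{|x|^2/(4a)}$ as a positive superposition of log-convex functions of $x$, and the conclusion follows from the elementary stability of log-convexity under affine composition, products and superposition (the same fact as in Closure Property \ref{system:sums}). Your argument is more structural and, as you observe, it isolates exactly what part (2) uses: only $\sigma p=\sigma_1p_1+\sigma_2p_2$ (with $\sigma_1,\sigma_2>0$), and neither $p_1,p_2\geq 1$, nor the Young relation, nor the auxiliary condition on $\sigma_1,\sigma_2$ stated in the theorem. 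This is consistent with, and slightly sharpens, the paper's proof: there the Cauchy--Schwarz inequality forces $\tfrac{\lambda_1^{1/2}}{\sigma_1p_1}=\tfrac{\lambda_2^{1/2}}{\sigma_2p_2}$, and the paper's particular formula for $\lambda_k$ realises this only under $\sigma_1p_1p_1'=\sigma_2p_2p_2'$ (note the indices in the theorem's displayed condition appear to be transposed relative to Closure Property \ref{convolutions}); choosing $\lambda_k^{1/2}=\sigma_kp_k/(\sigma_1p_1+\sigma_2p_2)$ instead would free the paper's proof of that condition too, in agreement with your finding. What the paper's computation buys is the visible kinship with the proof of part (1) (the same kind of square identity, with $g_k$ replacing $v_k$), whereas your Gaussian factorisation buys transparency and weaker hypotheses but has no analogue for part (1), which you rightly leave to \cite{BB}. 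Your deferral of the technical well-definedness and regularity issues matches the paper's own stance.
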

It is implicit in the statement of Theorem \ref{BBLY} that $u_1$ and $u_2$ satisfy certain technical hypotheses ensuring that all terms which arise are well-defined (and are satisfied when $u_1$ and $u_2$ are \emph{solutions} of the appropriate heat equation); see \cite{BB} for further discussion.

The closure property in Theorem \ref{BBLY} decouples as in \eqref{system:sums}--\eqref{system:compositions}. However, the main point here is that Theorem \ref{BBLY} may be combined with other system closure properties which may not decouple. For example, combining with Theorem
\ref{t:generalconcavity} (in the simplest case of Example \ref{example:simplest}) generates the monotonicity of expressions of the form
$t\mapsto t^\alpha\|u_1^{1/p_1}(t, \cdot)*u_2^{1/p_2}(t,\cdot)\|_{L^p(\mathbb{R}^n)}$, for $\frac{1}{p_1}+\frac{1}{p_2}\geq 1+\frac{1}{p}$ and suitable $\alpha\geq 0$; see \cite{BB} for the origins of this.

In the next section we provide proofs of Theorems \ref{classicaliffthm}, \ref{t:generalconcavity}, \ref{BLLY} and \ref{BBLY}.

\section{Proof of Theorems \ref{classicaliffthm}, \ref{t:generalconcavity}, \ref{BLLY} and \ref{BBLY}} \label{section:LiYau}

\subsection{Proof of the Theorem \ref{t:generalconcavity}}
Let $u = (u_1,\ldots,u_m)$ and perform straightforward computations to get
\begin{align*}
\partial_t \widetilde{u} - \frac{1}{1 + \lambda} \dive(A^{-1}\nabla \widetilde{u})  & = t^{\frac{\lambda n}{2}} \sum_{j=1}^m \partial_j B(u) (\partial_t u_j - \dive(A^{-1}\nabla u_j) \\
& \qquad  + \frac{\lambda}{1 + \lambda} t^{\frac{\lambda n}{2}} \sum_{j=1}^m \partial_j B(u) \dive(A^{-1}\nabla u_j) \\
& \qquad + \frac{\lambda n}{2} t^{\frac{\lambda n}{2} -1}B(u) \\
& \qquad - \frac{t^{\frac{\lambda n}{2}}}{1+ \lambda}  \sum_{j,k=1}^m \partial_{j,k}B(u)\langle A^{-1/2}\nabla u_j, A^{-1/2}\nabla u_k \rangle.
\end{align*}
Since $D^2(\log u_j) + \frac{1}{2t}A \geq 0$ it follows that the trace of $A^{-1}(D^2(\log u_j) + \frac{1}{2t}A)$ is nonnegative and therefore
\[
\dive(A^{-1}\nabla u_j) \geq \frac{|A^{-1/2}\nabla u_j|^2}{u_j} - \frac{n}{2t}u_j.
\]
Hence, applying both hypotheses on each $u_j$, and using that $B$ is increasing in each variable, we obtain
\begin{align*}
\partial_t \widetilde{u} - \frac{1}{1 + \lambda} \dive(A^{-1}\nabla \widetilde{u}) & \geq \frac{\lambda}{1 + \lambda} t^{\frac{\lambda n}{2}} \sum_{j=1}^m \partial_j B(u) \bigg(  \frac{|A^{-1/2}\nabla u_j|^2}{u_j} - \frac{n}{2t}u_j \bigg) \\
& + \frac{\lambda n}{2} t^{\frac{\lambda n}{2} -1}B(u) - \frac{t^{\frac{\lambda n}{2}}}{1+ \lambda}  \sum_{j,k=1}^m \partial_{j,k}B(u)\langle A^{-1/2}\nabla u_j, A^{-1/2}\nabla u_k \rangle
\end{align*}
and therefore
\begin{align*}
\partial_t \widetilde{u} - \frac{1}{1 + \lambda} \dive(A^{-1}\nabla \widetilde{u}) & \geq \frac{t^{\frac{\lambda n}{2}}}{1 + \lambda}  \bigg[\sum_{j=1}^m \lambda \frac{\partial_jB(u)}{u_j}|A^{-1/2}\nabla u_j|^2 \\
& \qquad - \sum_{j,k = 1}^m \partial_{j,k}B(u) \langle A^{-1/2}\nabla u_j, A^{-1/2}\nabla u_k \rangle  \bigg] \\
&  \qquad \qquad + \frac{\lambda n}{2(1+\lambda)} t^{\frac{\lambda n}{2} -1} \bigg[ (1 + \lambda)B(u) - \sum_{j=1}^m \partial_jB(u) u_j \bigg].
\end{align*}
The two terms in square brackets are nonnegative as a consequence of our hypothesis \eqref{e:generalconcavity} on the hessian of $B$. For the first square-bracketed term, this is true since
\begin{align*}
& \sum_{j=1}^m \lambda \frac{\partial_jB(u)}{u_j}|A^{-1/2}\nabla u_j|^2 - \sum_{j,k = 1}^m \partial_{j,k}B(u) \langle A^{-1/2}\nabla u_j, A^{-1/2}\nabla u_k \rangle   \\
& \qquad \qquad \qquad = \sum_{\ell = 1}^n \bigg\langle \bigg(\lambda \diag\bigg(\frac{\partial_1B(u)}{u_1},\cdots,\frac{\partial_mB(u)}{u_m}\bigg) - D^2B(u)\bigg)X^\ell,X^\ell \bigg\rangle
\end{align*}
where $X^\ell \in \mathbb{R}^m$ has $j$th component equal to the $\ell$th component of $A^{-1/2}\nabla u_j$.

For the second term, take $x \in \mathbb{R}^m_+$ and observe that
\begin{align*}
& \langle x,\nabla \rangle \bigg( (1+ \lambda)B(x) - \sum_{j=1}^m \partial_jB(x)x_j\bigg) \\
& = \lambda \langle x,\nabla B(x) \rangle - \sum_{j,k=1}^m \partial_{j,k}B(x) x_jx_k \\
 & = \bigg\langle \bigg(\lambda \diag\bigg(\frac{\partial_1B(x)}{x_1},\cdots,\frac{\partial_mB(x)}{x_m}\bigg) - D^2B(x)\bigg)x,x \bigg\rangle
\end{align*}
which is nonnegative thanks to \eqref{e:generalconcavity}. Since $B(0) \geq 0$, upon integrating, it follows that
\begin{equation} \label{e:generalconcavityconseq}
(1+ \lambda)B(x) - \sum_{j=1}^m \partial_jB(x)x_j \geq 0
\end{equation}
for all $x \in \mathbb{R}^m_+$. Hence $\partial_t \widetilde{u}\geq\frac{1}{1+\lambda} \dive(A^{-1}\nabla \widetilde{u})$.

Finally, to see the closure of the log-convexity inequality, first observe the identity
\begin{equation*}
(D^2 (\log \widetilde{u}))_{j,k} =  \sum_{i = 1}^m \partial_i \Theta(\log u) (D^2 (\log u_i))_{j,k}  + \langle D^2 \Theta(\log u) \partial_j \log u, \partial_k \log u \rangle
\end{equation*}
for the $(j,k)$ component of $D^2(\log \widetilde{u})$. From the convexity of $\Theta$ it follows that
\begin{align*}
& \sum_{j,k = 1}^{n} \langle D^2 \Theta(\log u) \partial_j \log u, \partial_k \log u \rangle X_j X_k \\
& = \bigg\langle D^2\Theta(\log u)\bigg(\sum_{j=1}^n X_j \partial_j \log u\bigg),  \sum_{k=1}^n X_k \partial_k \log u \bigg\rangle
\end{align*}
is nonnegative for each $X \in \mathbb{R}^n$, and therefore
\begin{equation*}
D^2(\log \widetilde{u}) \geq \sum_{i = 1}^m \partial_i \Theta(\log u) D^2(\log u_i).
\end{equation*}
Each $\partial_i \Theta(\log u)$ is nonnegative since $B$ is increasing in each variable, hence the log-convexity estimate imposed on each $u_i$ implies
\begin{align*}
D^2(\log \widetilde{u}) \geq - \frac{A}{2t} \sum_{i = 1}^m \partial_i \Theta(\log u).
\end{align*}
However, it is clear from the definition of $\Theta$ that
\begin{equation*}
\sum_{i=1}^m \partial_i \Theta(\log x) = \frac{1}{B(x)} \sum_{i=1}^m  \partial_i B(x)x_i
\end{equation*}
for each $x \in \mathbb{R}^m_+$, and by \eqref{e:generalconcavityconseq}, $\frac{1}{1+\lambda} D^2(\log \widetilde{u}) \geq -\frac{A}{2t}$ clearly follows.

\subsection{Proof of the Theorem \ref{classicaliffthm}}
We begin with two simple observations. The first is that since $B$ is homogeneous of degree $1$,
$$
\Theta(s_1+\tau,s_2+\tau)=\tau+\Theta(s_1,s_2)
$$
for all $\tau, s_1,s_2\in\mathbb{R}$. In particular, taking $\tau=-s_1$, we obtain $\Theta(s_1,s_2)=s_1+h(s_2-s_1)$ where we recall that $h:=\Theta(0,\cdot)$. Thus,
\begin{equation}\label{hessformula}
D^2\Theta(s_1,s_2)=h''(s_2-s_1)\left(\begin{array}{ccccc} 1 & -1 \\ -1 & 1 \end{array} \right),
\end{equation}
which is positive semidefinite for all $s$ if and only if $h''\geq 0$. The second observation, which follows directly from Euler's homogeneous function theorem and the definition of $\Theta$, is that $\partial_1\Theta,\partial_2\Theta\geq 0$ and
\begin{equation}\label{neal'}
\partial_1\Theta+\partial_2\Theta\equiv 1.
\end{equation}
From the proof of Theorem \ref{t:generalconcavity}, we have the identity
\begin{equation} \label{pos}
(D^2 (\log \widetilde{u}))_{j,k} =  \sum_{i = 1}^2 \partial_i \Theta(\log u) (D^2 (\log u_i))_{j,k}  + \langle D^2 \Theta(\log u) \partial_j \log u, \partial_k \log u \rangle
\end{equation}
and the fact that if $\Theta$ is convex then the matrix defined by the second term on the right-hand side is positive semidefinite. Thus, if $\Theta$ is convex and $u_1, u_2$ satisfy \eqref{liyau}, then by \eqref{neal'} it follows that $\widetilde{u}$ satisfies \eqref{liyau}.

Suppose now that $D^2\Theta(s)$ fails to be positive semidefinite for some $s\in\mathbb{R}^2$. By \eqref{hessformula} we have that necessarily $h''(s_2-s_1)<0$. For $a_1,a_2\in\mathbb{R}^n$ let $u_1,u_2:\mathbb{R}_+\times\mathbb{R}^n\rightarrow\mathbb{R}_+$ be given by
$$
u_j(t,x)=e^{t\Delta}\delta_{a_j}(x)=\frac{1}{(4\pi t)^{n/2}}e^{-\frac{|x-a_j|^2}{4t}}.
$$
Since $\Delta(\log u_j)=-\frac{n}{2t}$ for all $(t,x)\in\mathbb{R}_+\times\mathbb{R}^n$ and each $j=1,2$, elementary calculations using \eqref{hessformula}--\eqref{pos} reveal that
\begin{equation}\label{explicit}
\Delta(\log \widetilde{u})=-\frac{n}{2t}+h''\left(\frac{|x-a_1|^2-|x-a_2|^2}{4t}\right)\frac{|a_1-a_2|^2}{4t^2}.
\end{equation}
Finally observe that, for any given $(t,x)$, we may choose $a_1\not=a_2$ satisfying
$$
\frac{|x-a_1|^2-|x-a_2|^2}{4t}= s_2-s_1
$$
and so
$\Delta(\log\widetilde{u})<-\frac{n}{2t}$. From this it follows that if $\Theta$ is not convex, then \eqref{liyau} is not preserved. The final conclusion of Theorem \ref{classicaliffthm} is also clear from the above by choosing such $u_1,u_2$ with $a_1 \neq a_2$.

\subsection{Proof of Theorem \ref{BLLY}}
A very close inspection of the proof of Proposition 8.9 in \cite{BCCT} reveals the identity
\begin{align*}
\frac{\partial_t \widetilde{u}-\dive(M^{-1}\nabla\widetilde{u})}{\widetilde{u}}&=\sum_{j=1}^mp_j\left(\frac{\partial_t u_j-\dive(A_j^{-1}\nabla u_j)}{u_j}\right)\\
&\qquad +\sum_{j=1}^mp_j\left\langle L_j^*A_jL_j(w_j-\overline{w}),(w_j-\overline{w})\right\rangle\\
&\qquad+\sum_{j=1}^m p_j\langle (A_j^{-1}-L_jM^{-1}L_j^*)A_jL_jw_j,A_jL_jw_j\rangle\\
&\qquad+\sum_{j=1}^m p_j\tr\left((A_j^{-1}-L_jM^{-1}L_j^*)\left(D^2(\log u_j)+\frac{A_j}{2t}\right)\right),
\end{align*}
where $w_j$ and $\overline{w}$ are given in \eqref{defw}. Theorem \ref{BLLY} now follows from this identity combined with the more elementary
$$
D^2(\log\widetilde{u})=\sum_{j=1}^mp_jL_j^*D^2(\log u_j)L_j
$$
and the definition of $M$.

\subsection{Proof of Theorem \ref{BBLY}}

We begin with the closure of the log-convexity estimate. The scalar version which runs parallel to this closure property (i.e. if $\Delta(\log u_j) \geq -\frac{n}{2\sigma_j t}$ for $j=1,2$, then $\Delta(\log \widetilde{u}) \geq -\frac{n}{2\sigma t}$) was established in \cite{BB} (see Theorem 6), however since this is an independent result, we provide a proof of the matrix version stated in Theorem \ref{BBLY} (naturally, following a similar approach).

We have
\begin{equation*}
\frac{\widetilde{u}^{2/p} }{p} \partial_{ij}(\log \widetilde{u}) = \widetilde{u}^{1/p} \partial_{ij} (\widetilde{u}^{1/p}) - \partial_i (\widetilde{u}^{1/p}) \partial_j (\widetilde{u}^{1/p})
\end{equation*}
and depending on which term we apply the derivatives, we have the following alternative expressions
\begin{align*}
\partial_{ij} (\widetilde{u}^{1/p})  = \left\{\begin{array}{llll} \frac{1}{p_1p_2} u_1^{1/p_1} \partial_i(\log u_1) * u_2^{1/p_2} \partial_j(\log u_2) \\
\frac{1}{p_1^2} u_1^{1/p_1}\partial_i(\log u_1) \partial_j(\log u_1) * u_2^{1/p_2} + \frac{1}{p_1}u_1^{1/p_1} \partial_{ij}(\log u_1) * u_2^{1/p_2} \\
\frac{1}{p_2^2} u_1^{1/p_1} * u_2^{1/p_2}\partial_i(\log u_2) \partial_j(\log u_2) + \frac{1}{p_2}u_1^{1/p_1}  * u_2^{1/p_2} \partial_{ij}(\log u_2). \end{array} \right.
\end{align*}
Hence, for any $\lambda_1$ and $\lambda_2$ (to be specified momentarily) we may express
\begin{align*}
\frac{\widetilde{u}^{2/p} }{p} \partial_{ij}(\log \widetilde{u}) & = I_{ij}^1 + I_{ij}^2 + II_{ij}^1 + II_{ij}^2 + II_{ij}^3 - II_{ij}^4
\end{align*}
where
\begin{align*}
I_{ij}^1 & := \frac{\lambda_1}{p_1} \widetilde{u}^{1/p}(u_1^{1/p_1} \partial_{ij}(\log u_1) * u_2^{1/p_2}) \\
I_{ij}^2 & := \frac{\lambda_2}{p_2} \widetilde{u}^{1/p}(u_1^{1/p_1}  * u_2^{1/p_2}\partial_{ij}(\log u_2)) \\
II_{ij}^1 & := \frac{\lambda_1}{p_1^2} \widetilde{u}^{1/p}(u_1^{1/p_1}\partial_i(\log u_1) \partial_j(\log u_1) * u_2^{1/p_2}) \\
II_{ij}^2 & := \frac{\lambda_2}{p_2^2} \widetilde{u}^{1/p}(u_1^{1/p_1} * u_2^{1/p_2}\partial_i(\log u_2) \partial_j(\log u_2)) \\
II_{ij}^3 & := \frac{1 - \lambda_1 - \lambda_2}{p_1p_2}\widetilde{u}^{1/p} (u_1^{1/p_1} \partial_i(\log u_1) * u_2^{1/p_2} \partial_j(\log u_2)) \\
II_{ij}^4 & := \partial_i (u_1^{1/p_1} * u_2^{1/p_2}) \partial_j (u_1^{1/p_1} * u_2^{1/p_2}).
\end{align*}
For arbitrary $X \in \mathbb{R}^n$, from the hypothesis $D^2(\log u_k) \geq -\frac{I}{2\sigma_kt}$ for the $I^k_{ij}$ terms, we have
\begin{align*}
\frac{\widetilde{u}^{2/p} }{p} \langle D^2(\log \widetilde{u})X,X \rangle  & \geq - \frac{\widetilde{u}^{2/p}}{2t}\bigg(\frac{\lambda_1}{\sigma_1 p_1} + \frac{\lambda_2}{\sigma_2 p_2} \bigg)  |X|^2 + N
\end{align*}
where the function $N : \mathbb{R}_+ \times \mathbb{R}^n \to \mathbb{R}$ is given by
\begin{align*}
N & := \frac{\lambda_1}{p_1^2}\widetilde{u}^{1/p} (u_1^{1/p_1}\langle \nabla(\log u_1),X\rangle^2 * u_2^{1/p_2}) \\
& \qquad + \frac{\lambda_2}{p_2^2} \widetilde{u}^{1/p} (u_1^{1/p_1} * u_2^{1/p_2}\langle \nabla(\log u_2),X\rangle^2) \\
& \qquad + \frac{1 - \lambda_1 - \lambda_2}{p_1p_2}\widetilde{u}^{1/p} ( u_1^{1/p_1} \langle \nabla(\log u_1),X\rangle  * u_2^{1/p_2} \langle \nabla(\log u_2),X\rangle) \\
& \qquad - \langle \nabla(u_1^{1/p_1} * u_2^{1/p_2}),X \rangle^2.
\end{align*}
Choosing
\[
\lambda_k = \bigg(\frac{\frac{1}{p_k'}}{\frac{1}{p_1'} + \frac{1}{p_2'}}\bigg)^2
\]
it follows that
\[
\frac{\lambda_1}{\sigma_1 p_1} + \frac{\lambda_2}{\sigma_2 p_2}  = \frac{1}{\sigma p}
\]
and thus we are left to show that $N$ is a nonnegative function. However, if $g_k : \mathbb{R}_+ \times \mathbb{R}^n \to \mathbb{R}$ is given by
\[
g_k := \frac{\lambda_k^{1/2}}{p_k} \langle \nabla(\log u_k),X \rangle,
\]
then the pointwise identity
\begin{align*}
2N(x) & = \int u_1^{1/p_1}(x-y)u_2^{1/p_2}(y) u_1^{1/p_1}(x-z)u_2^{1/p_2}(z) \times \\
& \qquad \qquad \qquad  |g_1(x-y) + g_2(y) - (g_1(x-z) + g_2(z))|^2 \, \mathrm{d}y\mathrm{d}z
\end{align*}
is readily verified by expanding the square term in the integrand and making use of the fact that $\lambda_1^{1/2} + \lambda_2^{1/2} = 1$, from which it is clear that $N(x) \geq 0$.

The remaining claim in Theorem \ref{BBLY} was proved in \cite{BB} from which we may distill the identity
\begin{align*}
\frac{\partial_t \widetilde{u} - \sigma \Delta \widetilde{u}}{\widetilde{u}^{(p-2)/p}}  & = \frac{p}{p_1} \widetilde{u}^{1/p}\bigg( u_1^{1/p_1} \frac{\partial_t u_1 - \sigma_1 \Delta u_1}{u_1} * u_2^{1/p_2} \bigg) \\
&\qquad  + \frac{p}{p_2} \widetilde{u}^{1/p}\bigg( u_1^{1/p_1}  * u_2^{1/p_2} \frac{\partial_t u_2 - \sigma_2 \Delta u_2}{u_2}\bigg) \\
& \qquad + \frac{1}{2} \int u_1^{1/p_1}(x-y)u_2^{1/p_2}(y) u_1^{1/p_1}(x-z)u_2^{1/p_2}(z) \times \\
& \qquad \qquad \qquad |v_1(x-y) + v_2(y) - (v_1(x-z) + v_2(z))|^2 \, \mathrm{d}y\mathrm{d}z,
\end{align*}
where $v_k : \mathbb{R}_+ \times \mathbb{R}^n \to \mathbb{R}^n$ is given by
\[
v_k := \bigg(\frac{p\sigma_k}{p_kp_k'}\bigg)^{1/2} \nabla(\log u_k).
\]
In this way we see a striking similarity between the proofs of both closure properties in Theorem \ref{BBLY}.

\begin{remark}\label{Toscaniremark}
If we consider $n=1$ for simplicity, Toscani \cite{Toscani} observed a beautiful connection between the nonnegativity of the function $N$, defined above, and Stam's heat-flow monotonicity proof in \cite{Stam} (see also \cite{Blachman}) of the entropy power inequality
\begin{equation} \label{e:EPI}
e^{2H(Y_1+Y_2)} \geq e^{2H(Y_1)} + e^{2H(Y_2)}
\end{equation}
for independent random variables $Y_1$ and $Y_2$ (with equality for gaussian random variables). Here, $H(Y) = - \int_\mathbb{R} f \log f$ is the entropy functional of the random variable $Y$ with probability density $f$. A key inequality in the argument of Stam is
\begin{equation} \label{e:Blachman}
\alpha_1^2 I(f_1) + \alpha_2^2 I(f_2) \geq (\alpha_1 + \alpha_2)^2 I(f_1 * f_2)
\end{equation}
for probability densities $f_1, f_2$, where $I(f) = \int_{\mathbb{R}} \frac{(f')^2}{f}$ is the Fisher information of $f$, and $\alpha_1, \alpha_2$ are constants. One can check that \eqref{e:Blachman} follows from the nonnegativity of $N$, with $f_j = u^{1/p_j}$, by multiplying by $(f_1 * f_2)^{-1}$ and integrating (see \cite{Toscani} for further details on this perspective).
\end{remark}

\section{Further results}\label{section:remarks}
\subsection{Other forms of concavity}
As may be expected the Closure Properties \ref{concavecase} and \ref{anisotropiccase} have a common generalisation involving a certain directional notion of concavity introduced in this context by Ledoux \cite{L} and Ivanisvili--Volberg \cite{V1}. Here we adopt the terminology from \cite{L}.
\begin{definition}
Given a smooth function $B$ on an open subset of $\mathbb{R}^m$ and $\Gamma=(\Gamma_{j,k})$, where $\Gamma_{j,k}$ is an $n_j\times n_k$ matrix, we say that $B$ is \emph{$\Gamma$-concave} if
\begin{equation}\label{gammadef}
\sum_{j,k=1}^m\partial_{j,k}B\langle\Gamma_{j,k}v_k,v_j\rangle\leq 0
\end{equation}
for all vectors $v_j\in\mathbb{R}^{n_j}$, $j=1,\ldots,m$.
\end{definition}
If $n_j=n$ and $L_j=I$ for each $1\leq j\leq m$ then the $\Gamma$-concavity of $B$ reduces to
$$
\sum_{j,k=1}^m\partial_{j,k}B\langle v_j,v_k\rangle=\sum_{\ell=1}^n\sum_{j,k=1}^m\partial_{j,k}B v_{j,\ell}v_{k,\ell}=\sum_{\ell=1}^n\langle D^2Bv^{\ell}, v^{\ell}\rangle\leq 0
$$
for all $m$-tuples of vectors $(v_1,\hdots,v_m)\in (\mathbb{R}^n)^m$. Here $v_{k,\ell}$ denotes the $\ell$th component of $v_k$, and $v^\ell=(v_{1,\ell},\hdots,v_{m,\ell})$ for each $k=1,\ldots,m$ and $\ell = 1,\ldots,n$. This is of course equivalent to the concavity of $B$ in the classical sense.

A simple example of a related closure property is the following:
\begin{theorem}\label{gammatheorem}
Suppose $B:\mathbb{R}_+^m\rightarrow\mathbb{R}_+$ is twice differentiable and increasing in each variable. For $j=1,\ldots,m$, suppose $L_j:\mathbb{R}^n\rightarrow\mathbb{R}^{n_j}$ is an isometry (i.e. $L_jL_j^*=I$) and let $u_j:\mathbb{R}_+\times\mathbb{R}^{n_j}\rightarrow\mathbb{R}_+$ be given such that $\partial_t u_j\geq\sigma\Delta u_j$. If $B$ is $\Gamma$-concave, where $\Gamma_{j,k}=L_jL_k^*$, then
$\widetilde{u}:=B(u_1\circ L_1,\hdots,u_m\circ L_m)$ satisfies $\partial_t\widetilde{u}\geq\sigma\Delta\widetilde{u}$.
\end{theorem}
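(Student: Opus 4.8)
The plan is to imitate the computation behind Closure Property \ref{concavecase}, but now keeping track of the fact that each $u_j$ lives on $\mathbb{R}^{n_j}$ and is pulled back to $\mathbb{R}^n$ via the isometry $L_j$. First I would write $\widetilde{u}=B(v_1,\dots,v_m)$ with $v_j:=u_j\circ L_j$, and record the chain-rule identities $\nabla\widetilde{u}=\sum_j\partial_jB(v)\,L_j^*\nabla u_j(L_j\cdot)$ and, for the second derivatives, $D^2\widetilde{u}=\sum_j\partial_jB(v)\,L_j^*(D^2u_j)(L_j\cdot)L_j+\sum_{j,k}\partial_{j,k}B(v)\,L_j^*\nabla u_j\otimes L_k^*\nabla u_k$. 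Taking the Laplacian (i.e. the trace) and using $L_jL_j^*=I$, the first sum contributes $\sum_j\partial_jB(v)\,(\Delta u_j)(L_j\cdot)$, since $\tr(L_j^*(D^2u_j)L_j)=\tr((D^2u_j)L_jL_j^*)=\Delta u_j$. The second sum contributes $\sum_{j,k}\partial_{j,k}B(v)\,\langle L_j^*\nabla u_j,L_k^*\nabla u_k\rangle=\sum_{j,k}\partial_{j,k}B(v)\,\langle L_kL_j^*\nabla u_j,\nabla u_k\rangle$, where the inner product in the middle expression is in $\mathbb{R}^n$ and in the last in $\mathbb{R}^{n_k}$.

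Assembling these, I obtain the identity
\[
\partial_t\widetilde{u}-\sigma\Delta\widetilde{u}=\sum_{j=1}^m\partial_jB(v)\bigl(\partial_tu_j-\sigma\Delta u_j\bigr)(L_j\cdot)-\sigma\sum_{j,k=1}^m\partial_{j,k}B(v)\,\langle L_kL_j^*\nabla u_j,\nabla u_k\rangle,
\]
where I have used $\partial_t\widetilde{u}=\sum_j\partial_jB(v)\,(\partial_tu_j)(L_j\cdot)$. The first sum is nonnegative because $B$ is increasing in each variable and each $u_j$ is a supersolution $\partial_tu_j\geq\sigma\Delta u_j$. For the second sum, I set $v_k:=\nabla u_k(L_k\cdot)\in\mathbb{R}^{n_k}$ and recognise $\sum_{j,k}\partial_{j,k}B(v)\,\langle\Gamma_{k,j}v_j,v_k\rangle$ with $\Gamma_{k,j}=L_kL_j^*$; since $\sigma>0$, the $\Gamma$-concavity hypothesis \eqref{gammadef} (applied with these particular $v_j$) makes $-\sigma$ times this quantity nonnegative. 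Hence $\partial_t\widetilde{u}\geq\sigma\Delta\widetilde{u}$.

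The one point that needs a moment's care — and I expect it to be the only genuine obstacle — is matching the index conventions: the quadratic form appearing in the computation is $\sum_{j,k}\partial_{j,k}B\,\langle L_kL_j^*\nabla u_j,\nabla u_k\rangle$, and one must check this is exactly the expression $\sum_{j,k}\partial_{j,k}B\,\langle\Gamma_{j,k}v_k,v_j\rangle$ from Definition of $\Gamma$-concavity with $\Gamma_{j,k}=L_jL_k^*$. Using the symmetry $\partial_{j,k}B=\partial_{k,j}B$ and the transpose relation $\langle L_kL_j^*a,b\rangle=\langle a,L_jL_k^*b\rangle$, one sees the two sums coincide after relabelling $j\leftrightarrow k$, so the stated choice $\Gamma_{j,k}=L_jL_k^*$ is precisely the right one. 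Everything else — the chain rule, the trace manipulations, and the sign bookkeeping — is routine, exactly paralleling the identity \eqref{e:concaveimages} in Closure Property \ref{concavecase}, to which this reduces when $n_j=n$ and $L_j=I$.
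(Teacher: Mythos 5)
Your argument is correct and is essentially the paper's own proof: the same chain-rule identity for $\partial_t\widetilde{u}-\sigma\Delta\widetilde{u}$, with the isometry property $L_jL_j^*=I$ giving $\Delta(u_j\circ L_j)=(\Delta u_j)\circ L_j$, the monotonicity of $B$ handling the first sum and $\Gamma$-concavity (after the routine relabelling $j\leftrightarrow k$ you note) handling the second. No gaps.
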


\begin{proof} Define $u\circ L=(u_1\circ L_1,\hdots, u_m\circ L_m)$ and observe that
\begin{equation*}
\Delta\widetilde{u}=\sum_{j,k=1}^m\partial_{j,k}B\langle L_jL_k^*(\nabla u_k)\circ L_k,(\nabla u_j)\circ L_j\rangle+\sum_{j=1}^m\partial_jB(u\circ L)\Delta(u_j\circ L_j).
\end{equation*}
Since $L_j$ is an isometry, $\Delta(u_j\circ L_j)=(\Delta u_j)\circ L_j$, and so
\begin{equation}\label{identity}
\partial_t\widetilde{u}-\sigma\Delta\widetilde{u}=\sum_{j=1}^m \partial_j B(u\circ L)(\partial_t u_j-\sigma\Delta u_j) -\sigma\sum_{j,k=1}^m\partial_{j,k}B\langle L_jL_k^*(\nabla u_k)\circ L_k(\nabla u_j)\circ L_j\rangle,
\end{equation}
from which the theorem follows.
\end{proof}
Similar closure properties also hold in the context of Section \ref{section:systems}.
We refer the reader to \cite{V1} and \cite{L} for further discussion of this notion of concavity and its manifestations in geometric analysis.

\subsection{Closure properties for the Ornstein--Uhlenbeck equation}
One may equally well adopt the perspective taken in this paper for supersolutions of Ornstein--Uhlenbeck equations, and in some circumstances (but not all) the theory is equivalent. Here we content ourselves with some selected observations in the isotropic case where $A = \sigma^{-1} I$ for some $\sigma > 0$, and the corresponding differential operator
\[
\mathcal{L}_\sigma = \sigma \Delta - \langle x , \nabla \rangle.
\]
For solutions $U : \mathbb{R}_+ \times \mathbb{R}^n \to \mathbb{R}_+$ of the Ornstein--Uhlenbeck equation $\partial_tU = \mathcal{L}_\sigma U$, it is appropriate to replace \eqref{liyau} with the assumption that $U$ is log-convex. We also observe that if $U$ is a (sufficiently regular) supersolution
\[
\partial_t U \geq \mathcal{L}_\sigma U
\]
then
\[
t \mapsto \int_{\mathbb{R}^n} U(t,\cdot) \, \mathrm{d}\gamma_\sigma
\]
is nondecreasing, where
$$
\mathrm{d}\gamma_\sigma(x) = \frac{1}{(2\sigma \pi)^{n/2}}e^{-|x|^2/(2\sigma)}.
$$
In the case $\sigma = 1$ we have $\mathcal{L} = \mathcal{L}_1$ and $\mathrm{d}\gamma = \mathrm{d}\gamma_1$ introduced earlier in Example \ref{example:hyper}.

The following theorem is the natural analogue of Theorem \ref{t:generalconcavity}.
\begin{theorem} \label{t:OUgeneralconcavity}
Suppose $B:\mathbb{R}_+^m\rightarrow\mathbb{R}_+$ is twice differentiable and increasing in each variable, and that there
exists $\lambda \geq 0$ such that \eqref{e:generalconcavity} holds. If $U_j: \mathbb{R}_+ \times \mathbb{R}^n \to \mathbb{R}_+$ is given such that
\[
\partial_t U_j\geq \mathcal{L}_\sigma U_j \quad \mbox{ and } \quad D^2(\log U_j)\geq 0
\]
for all $j=1,\ldots,m$, then $\widetilde{U}:=B(U_1,\hdots,U_m)$ satisfies
\[
\partial_t \widetilde{U} \geq \mathcal{L}_{\frac{\sigma}{1+\lambda}} \widetilde{U}.
\]
Moreover, if $\Theta : \mathbb{R}^m \to \mathbb{R}$ given by \eqref{e:Thetadefn} is convex, then
\[
D^2(\log \widetilde{U})\geq 0.
\]
\end{theorem}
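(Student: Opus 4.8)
The plan is to establish Theorem \ref{t:OUgeneralconcavity} by a direct computation modelled on the proof of Theorem \ref{t:generalconcavity}. The argument is in fact somewhat cleaner in the Ornstein--Uhlenbeck setting, because the log-convexity hypothesis $D^2(\log U_j)\geq 0$ carries no $\tfrac{1}{2t}$-correction: consequently no compensating power of $t$ is needed (so $\widetilde{U}=B(U_1,\dots,U_m)$ with no prefactor), and no analogue of \eqref{e:generalconcavityconseq} enters the log-convexity half of the statement.

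First I would write $U=(U_1,\dots,U_m)$ and expand $\partial_t\widetilde{U}-\mathcal{L}_{\sigma/(1+\lambda)}\widetilde{U}$ by the chain rule, using $\nabla\widetilde{U}=\sum_j\partial_jB(U)\nabla U_j$, $\Delta\widetilde{U}=\sum_j\partial_jB(U)\Delta U_j+\sum_{j,k}\partial_{j,k}B(U)\langle\nabla U_j,\nabla U_k\rangle$ and $\langle x,\nabla\widetilde{U}\rangle=\sum_j\partial_jB(U)\langle x,\nabla U_j\rangle$. Every drift term then matches, and after splitting the Laplacian via $\tfrac{\sigma}{1+\lambda}=\sigma-\tfrac{\sigma\lambda}{1+\lambda}$ one arrives at
\begin{align*}
\partial_t\widetilde{U}-\mathcal{L}_{\frac{\sigma}{1+\lambda}}\widetilde{U}
&=\sum_{j=1}^m\partial_jB(U)\big(\partial_t U_j-\mathcal{L}_\sigma U_j\big)
+\frac{\sigma\lambda}{1+\lambda}\sum_{j=1}^m\partial_jB(U)\Delta U_j\\
&\qquad\qquad-\frac{\sigma}{1+\lambda}\sum_{j,k=1}^m\partial_{j,k}B(U)\langle\nabla U_j,\nabla U_k\rangle.
\end{align*}
The first sum is nonnegative by the hypotheses $\partial_t U_j\geq\mathcal{L}_\sigma U_j$ and the monotonicity of $B$. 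For the remaining two sums I would use that $D^2(\log U_j)\geq 0$ gives, on taking traces, $\Delta U_j\geq|\nabla U_j|^2/U_j$; substituting this (legitimate since $\partial_jB(U)\geq 0$) and grouping by spatial coordinate shows that those sums are bounded below by
\[
\frac{\sigma}{1+\lambda}\sum_{\ell=1}^n\Big\langle\Big(\lambda\diag\big(\tfrac{\partial_1B(U)}{U_1},\dots,\tfrac{\partial_mB(U)}{U_m}\big)-D^2B(U)\Big)X^\ell,X^\ell\Big\rangle,
\]
where $X^\ell\in\mathbb{R}^m$ has $j$th component equal to the $\ell$th component of $\nabla U_j$; this is nonnegative by hypothesis \eqref{e:generalconcavity}. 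Hence $\partial_t\widetilde{U}\geq\mathcal{L}_{\sigma/(1+\lambda)}\widetilde{U}$.

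For the log-convexity claim I would reuse the pointwise identity already used in the proof of Theorem \ref{t:generalconcavity}, namely
\[
(D^2(\log\widetilde{U}))_{j,k}=\sum_{i=1}^m\partial_i\Theta(\log U)(D^2(\log U_i))_{j,k}+\langle D^2\Theta(\log U)\,\partial_j\log U,\partial_k\log U\rangle,
\]
where $j,k$ now index spatial coordinates and $\partial_j\log U=(\partial_j\log U_1,\dots,\partial_j\log U_m)\in\mathbb{R}^m$. For $X\in\mathbb{R}^n$ the quadratic form associated with the second term equals $\langle D^2\Theta(\log U)W,W\rangle$ with $W=\sum_jX_j\,\partial_j\log U$, which is nonnegative since $\Theta$ is convex; the first term contributes $\sum_i\partial_i\Theta(\log U)\langle D^2(\log U_i)X,X\rangle$, which is nonnegative since each $\partial_i\Theta\geq 0$ (as $B$ is increasing) and each $D^2(\log U_i)\geq 0$. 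Therefore $D^2(\log\widetilde{U})\geq 0$; note that, unlike in Theorem \ref{t:generalconcavity}, this part uses neither \eqref{e:generalconcavity} nor $\lambda$.

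I do not expect a genuine obstacle here; the only point needing care is bookkeeping, namely checking that the drift terms $\langle x,\nabla\cdot\rangle$ cancel exactly so that the computation is precisely the heat-equation computation of Theorem \ref{t:generalconcavity} with all $\tfrac{1}{2t}$-corrections set to zero and the factor $t^{\lambda n/2}$ removed. As in the heat case, one should also record the implicit regularity and decay assumptions on the $U_j$ under which the chain-rule manipulations are valid and under which the associated functional $t\mapsto\int_{\mathbb{R}^n}\widetilde{U}(t,\cdot)\,\mathrm{d}\gamma_{\sigma/(1+\lambda)}$ is consequently nondecreasing.
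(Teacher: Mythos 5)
Your proposal is correct and follows essentially the same route as the paper's proof: the identical expansion of $\partial_t\widetilde{U}-\mathcal{L}_{\sigma/(1+\lambda)}\widetilde{U}$ into the supersolution term plus $\frac{\sigma\lambda}{1+\lambda}\sum_j\partial_jB(U)\Delta U_j-\frac{\sigma}{1+\lambda}\sum_{j,k}\partial_{j,k}B(U)\langle\nabla U_j,\nabla U_k\rangle$, the same use of $U_j\Delta U_j\geq|\nabla U_j|^2$ from log-convexity together with \eqref{e:generalconcavity}, and the same $\Theta$-based argument for the second assertion. Your observations that no $t$-prefactor and no analogue of \eqref{e:generalconcavityconseq} are needed here are accurate and consistent with the paper.
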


\begin{proof}
If $U = (U_1,\ldots,U_m)$ then
\begin{align*}
\partial_t \widetilde{U} - \mathcal{L}_{\frac{\sigma}{1+\lambda}} \widetilde{U} & = \sum_{j=1}^m \partial_j B(U) (\partial_t U_j -  \mathcal{L}_\sigma U_j) \\
& \qquad + \frac{\sigma \lambda}{1 + \lambda}  \sum_{j=1}^m \partial_j B(U) \Delta U_j  - \frac{\sigma}{1+ \lambda} \sum_{j,k=1}^m \partial_{j,k}B(U)\langle \nabla U_j, \nabla U_k \rangle.
\end{align*}
Since $U_j$ is log-convex, it follows that $U_j \Delta U_j \geq |\nabla U_j|^2$, and since $B_j$ is increasing in each variable, we obtain
\begin{align*}
\frac{1 + \lambda}{\sigma} (\partial_t \widetilde{U} - \mathcal{L}_{\frac{\sigma}{1+\lambda}} \widetilde{U}) & \geq \lambda \sum_{j=1}^m  \frac{\partial_j B(U)}{U_j} |\nabla U_j|^2  -  \sum_{j,k=1}^m \partial_{j,k}B(U)\langle \nabla U_j, \nabla U_k \rangle,
\end{align*}
which is nonnegative by \eqref{e:generalconcavity}.

Also, as in the proof of Theorem \ref{t:generalconcavity}, if we assume that $\Theta$ is convex, then
\begin{equation*}
D^2(\log \widetilde{U}) \geq \sum_{i = 1}^m \partial_i \Theta(\log U) D^2(\log U_i).
\end{equation*}
Since $\Theta$ is increasing in each variable, it is clear that the log-convexity of each $U_i$ implies the log-convexity of $\widetilde{U}$.
\end{proof}
We include the above proof since Theorem \ref{t:generalconcavity} does not in general appear to imply Theorem \ref{t:OUgeneralconcavity}. If $B$ is homogeneous, then one can establish such an implication using that
\begin{equation*}
e^{-\frac{1}{2\sigma}|x|^2}(\partial_t U - \mathcal{L}_\sigma U)(t,x) = e^{(n+2)t}(\partial_t u - \sigma\Delta u)(\tfrac{1}{2}e^{2t},e^tx)
\end{equation*}
if
\begin{equation} \label{e:OUtoheat}
e^{-\frac{1}{2\sigma}|x|^2}U(t,x) = e^{nt}u(\tfrac{1}{2}e^{2t},e^tx).
\end{equation}

Applying Theorem \ref{t:OUgeneralconcavity} with $B(x) = x^p$, $\lambda=p-1\geq 0$ and $U : \mathbb{R}_+ \times \mathbb{R}^n \to \mathbb{R}_+$ satisfying $\partial_t U = \mathcal{L} U$, yields $\partial_t(U^p) \geq \mathcal{L}_{1/p}(U^p)$, and hence
\[
t \mapsto \int_{\mathbb{R}^n} U(t,\cdot)^p \, \mathrm{d}\gamma_{1/p}
\]
is nondecreasing. It is vital that the measure in the above integral depends on $p$. In fact, if we substitute with the standard gaussian measure, the quantity
\begin{equation*}
t \mapsto \int_{\mathbb{R}^n} U(t,\cdot)^p \, \mathrm{d}\gamma
\end{equation*}
is \emph{nonincreasing}\footnote{This can be seen, for example, using \eqref{e:OUtoheat} to convert to heat-flow, and then identity \eqref{e:concaveimages} with the \emph{convex} function $B(x_1,x_2) = x_1^p x_2^{1-p}$.}. This latter observation reunites us with the hypercontractivity inequality \eqref{e:hyper} and implies that the left-hand side of the inequality is nonincreasing in the parameter $s$ (and hence the well-known fact that the inequality is valid for all $s$ larger than the critical value satisfying \eqref{e:hypercritical}).

As described in Example \ref{example:hyper}, the hypercontractivity inequality may be seen as a consequence of Closure Property \ref{anisotropiccase} involving anisotropic geometric means of heat supersolutions via the monotonicity of the quantity in \eqref{e:Ledouxmono}. We observe here that Ledoux's original proof of this monotonicity may also be viewed as a closure property of Ornstein--Uhlenbeck supersolutions. For simplicity, we exhibit this for $n=1$ in which case
\begin{align*}
\partial_t \widetilde{U} - \mathcal{L} \widetilde{U} = \sum_{j=1}^2 \partial_jB(\partial_t U_j - \mathcal{L}U_j) - \sum_{j,k = 1}^2 \partial_{jk}B  \langle \Gamma_{jk}v_j,v_k\rangle
\end{align*}
where
\[
\Gamma := \left(\begin{array}{ccc}
1 & \rho \\
\rho & 1
\end{array} \right), \qquad v_j := \frac{U_j' \circ L_j}{\sqrt{2}} \left(\begin{array}{ccc}
1 \\
1
\end{array} \right)
\]
(the meaning of the notation $B$, $\rho$, $L_1$ and $L_2$ can be found in Example \ref{example:hyper}). As shown by Ledoux, $B$ is $\Gamma$-concave and hence $\partial_t U_j \geq \mathcal{L}U_j$ for $j=1,2$ implies that $\partial_t \widetilde{U} \geq \mathcal{L}\widetilde{U}$.

\section{Contextual remarks} \label{section:context}

The idea of using heat-flow monotonicity as a tool traces back to at least
the work of Linnik \cite{Linnik} and Stam \cite{Stam} in the late 1950s. Linnik gave an information-theoretic proof of the central limit theorem and Stam was focussed on a rigorous justification of the entropy power inequality \eqref{e:EPI}. The Boltzmann $H$-functional is simply a sign change away from the entropy functional, and a short time later, this connection to collisional kinetic theory was capitalised upon by McKean \cite{McKean} who built on the work of Linnik in his study of the approach to equilibrium for Kac's caricature of a Maxwellian gas. Further discussion of these early instances in the use of heat-flow monotonicity may be found in Toscani \cite{ToscaniMilan}.

Subsequent important developments include the work of Bakry--\'Emery \cite{BE} in the study of hypercontractivity and logarithmic Sobolev inequalities, and Borell \cite{BorellPotAnal}, \cite{Borell} in the context of the Pr\'ekopa--Leindler and Ehrhard inequalities from geometric analysis. Ideas of Borell were taken up by Barthe--Cordero-Erausquin \cite{BC} and Barthe--Huet \cite{BartheHuet}, resulting in a heat-flow argument which simultaneously yields the sharp forward and reverse Brascamp--Lieb inequalities (see \cite{Barthetrans1}, \cite{Barthetrans2} for further details on the reverse inequality). This approach is based on the preservation of appropriate functional inequalities under heat-flow, but appears to be different from the heat-flow proofs of the (forward) Brascamp--Lieb inequalities in \cite{CLL} and \cite{BCCT} (and hence Closure Property \ref{anisotropiccase}). Substantial further developments have also been made in Brascamp--Lieb-type inequalities outside the realm of euclidean space; see, for example, \cite{BCLM}, \cite{BCM}, \cite{CLL}.

The choice of flow is often dictated by the nature of extremisers of an underlying inequality, and whilst gaussian, heat-flow is an obvious candidate. Naturally, the semigroup technique has also been fruitful in the context of other flows. For example, Carlen--Carrillo--Loss \cite{CCL} employed a nonlinear flow based on fast-diffusion in proving a certain case of Lieb's sharp Hardy--Littlewood--Sobolev inequality \cite{LiebHLS} (see also \cite{Dolbeault}). We also mention some very recent work of Dolbeault--Esteban--Loss \cite{DEL} where it was shown how nonlinear flows of this (porous media) type on smooth compact connected Riemannian manifolds without boundaries may be used to establish the rigidity of certain elliptic PDE in such a geometric setting.

The purpose of this discussion is to provide a sense of context for the current work, rather than to provide a comprehensive historical account of major developments. The enthusiastic reader is again encouraged to look at the surveys \cite{Barthe2}, \cite{CM}, \cite{LedouxICM} and \cite{BEsc} for further examples and yet broader perspective. We continue this section with rather more focussed contextual remarks concerning the interface with harmonic analysis, and we close by identifying certain basic limitations of the perspectives of the current paper.

\subsection{Discrete flows and induction-on-scales} Despite our attempt to systematise the identification of monotone quantities for heat-flow, the viability of semigroup interpolation as a means to establish a given functional inequality remains very far from clear in most situations (a good example, which appears to be beyond the scope of this article, is the heat-flow proof of the (non-endpoint) multilinear Kakeya inequality in \cite{BCT}). However, in some circumstances at least, certain more robust discrete analogues of this semigroup method, whereby the exact monotonicity property is weakened to a suitable recursive inequality across discrete time scales, appear to be more effective. This discrete ``monotonicity", often referred to as the \textit{method of induction-on-scales}, has been very effective in harmonic analysis in recent years; see for example \cite{BHT}, \cite{BEsc2}, \cite{BBJFA}, \cite{BBFL}, \cite{BD}, \cite{BDG}, \cite{Guth2015}, where this perspective is developed in the context of a variety of useful generalisations of the Brascamp--Lieb inequality. Earlier instances in harmonic analysis, where this connection is somewhat less explicit, begin with \cite{Bou}; see also the examples \cite{Wol} and \cite{TaoEndpoint} (Proposition 3.3).

It is also appropriate to mention discrete flows of a rather different nature which have arisen in a variety of contexts. The monotonicity of entropy along the central limit theorem is manifestly concerned with a discrete evolution and this problem (often called Shannon's problem) was resolved by Artstein \emph{et al.} in \cite{ABBN}. The central limit theorem itself was used as a tool to establish Nelson's hypercontractivity inequality by Gross \cite{Gross} and similar ideas were used by Beckner \cite{Beckner2}, \cite{Beckner} in order to prove that gaussians extremise the Hausdorff--Young inequality for the Fourier transform on $\mathbb{R}^n$. An example of a discrete flow which is arguably rather more intimately connected with the semigroup method may be found in work of Carlen--Loss \cite{CL}, where a discrete-flow proof of Lieb's sharp Hardy--Littlewood--Sobolev inequality \cite{LiebHLS} was found. Here, the flow is cleverly constructed by alternatively applying the action of a certain conformal transformation with the action of spherical symmetric decreasing rearrangement, to produce a sequence of functions converging strongly to an extremiser (for any positive input). We also note that there is an intimate connection between such induction-on-scales arguments and the \textit{Bellman function technique}, which has proved to be immensely powerful in modern harmonic analysis; see \cite{NT}, for example.

\subsection{Basic limitations}
While perhaps surprisingly powerful, our algebraic framework for generating monotone quantities for positive solutions of the heat equation has some important limitations. First of all, it will only generate quantities (or functionals) that are monotone on solutions if they are also monotone on (certain) supersolutions.
This can have some unfortunate consequences. For instance, for \emph{solutions} $u$, quantities of the form
$$
\int_{\mathbb{R}^n} B(u(t,x)) \, \mathrm{d}x
$$
are invariant (modulo harmless additive constants) under subtracting linear terms from the mapping $B$, while for supersolutions they are not. For example, the quantity
\begin{align*}
\left(\int_{\mathbb{R}^n} u_1^{1/2}u_2^{1/2}\right)^2-\int_{\mathbb{R}^n} u_1\int_{\mathbb{R}^n} u_2&=\int_{\mathbb{R}^n}\int_{\mathbb{R}^n} \big((u_1\otimes u_2)^{1/2}(u_2\otimes u_1)^{1/2}- u_1\otimes u_2\big) \\
\end{align*}
is monotone on solutions, but is not necessarily monotone on supersolutions. Typically we run into such difficulties whenever a functional involves ``differences". For example, if $u_1, u_2$ are genuine solutions then
\begin{align*}
Q_p(t):=&\left(\int_{\mathbb{R}^n} u_1^{1/p}u_2^{1/p}\right)^2-\int_{\mathbb{R}^n} u_1^{2/p}\int_{\mathbb{R}^n} u_2^{2/p}\\=&\int_{\mathbb{R}^n}\int_{\mathbb{R}^n} \big((u_1\otimes u_2)^{1/p}(u_2\otimes u_1)^{1/p}- (u_1\otimes u_2)^{2/p}\big) \\
=&-\frac{1}{2}\int_{\mathbb{R}^n}\int_{\mathbb{R}^n} \big((u_1\otimes u_2)^{1/p}-(u_2\otimes u_1)^{1/p}\big)^2
\end{align*}
satisfies
\begin{align*}
Q_p'(t)&=\frac{2}{p}\left(1-\frac{1}{p}\right)\int_{\mathbb{R}^n} u_1^{1/p}u_2^{1/p}\int_{\mathbb{R}^n} |v_1-v_2|^2 u_1^{1/p}u_2^{1/p}\\
& \qquad +\frac{2}{p}\left(\frac{2}{p}-1\right)\int_{\mathbb{R}^n}\int_{\mathbb{R}^n}|u_1(x)^{1/p}u_2(y)^{1/p}v_1(x)-u_2(x)^{1/p}u_1(y)^{1/p}v_2(x)|^2\,\mathrm{d}x\mathrm{d}y,
\end{align*}
and is thus nondecreasing for any $1\leq p\leq 2$. Here $v_j=\nabla (\log u_j)$. Again, $Q_p$ is not in general nondecreasing on \emph{supersolutions} $u_1,u_2$.
\footnote{We remark that the monotonicity of $Q_p$ for any $1\leq p\leq 2$ generates the Cauchy--Schwarz inequality by semigroup interpolation.} This limitation is reflected in the first order (monotonicity) condition imposed on the function $B$ in Closure Property \ref{concavecase}, which is an unavoidable concession to our algebraic framework (of course, unlike the concavity hypothesis, this condition is not invariant under linear changes of variables involving differences).
As a result, another important non-example is the entropy functional $-\int_{\mathbb{R}^n} f\log f$, which for similar reasons is monotone on solutions, but not in general on supersolutions.

For similar reasons it also unclear how to incorporate \emph{derivatives} into such an algebraic framework. One might recall that the Dirichlet energy
$$
\int_{\mathbb{R}^n} |\nabla u|^2
$$
is decreasing under heat flow; indeed the heat equation is a gradient flow for this functional.
While it is a fact that
$$
\partial_t u=\Delta u \quad \Rightarrow \quad \partial_t\widetilde{u}\leq\Delta\widetilde{u}
$$
for $\widetilde{u}:=|\nabla u|^2$, it is of course not true that
$$
\partial_t u\leq\Delta u \quad \Rightarrow \quad \partial_t\widetilde{u}\leq\Delta\widetilde{u}.
$$
It would seem likely that a framework that would also generate such ``difference" or ``differential" quantities would be quite different from the one we present in this paper.

\end{document}